\documentclass[12pt]{amsart}
\usepackage{fullpage}
\usepackage{tabu}
\usepackage{amssymb} 
\usepackage{amsmath} 
\usepackage{amscd}
\usepackage{amsbsy}
\usepackage{comment, enumerate}
\usepackage[matrix,arrow]{xy}
\usepackage[
        colorlinks, citecolor=blue,
]{hyperref}
\usepackage{mathrsfs}
\usepackage{color}
\usepackage{mathtools,caption}
\usepackage{todonotes}
\usepackage{stmaryrd}
\usepackage{tikz-cd} 
\usepackage[normalem]{ulem} 
\usepackage{cancel}

\numberwithin{equation}{section}

\newtheorem{lemma}{Lemma}[section]
\newtheorem{theorem}[lemma]{Theorem}

\newtheorem{proposition}[lemma]{Proposition}
\newtheorem*{theorem*}{Theorem}

\theoremstyle{definition}

\theoremstyle{remark}
\newtheorem{remark}[lemma]{Remark}

\newcommand{\ZZ}{\mathbb{Z}}
\newcommand{\QQ}{\mathbb{Q}}

\newcommand{\cD}{\mathcal{D}}
\newcommand{\cP}{\mathcal{P}}

\newcommand{\bQQ}{\bar{\mathbb{Q}}}
\newcommand{\OK}{\mathcal{O}_K}
\newcommand{\Fl}{\mathbb{F}_{\ell}}
\newcommand{\bF}{\bar{\mathbb{F}}}

\newcommand{\ty}{\tilde{y}}

\newcommand{\h}{\mathrm{h}}
\newcommand{\fn}{\mathfrak{n}}

\newcommand{\rhoE}{\rho_{E,n}}
\newcommand{\rhof}{\rho_{f,\fn}}
\newcommand{\rhofn}{\rho_{f,n}}

\DeclareMathOperator{\Norm}{\mathcal{N}}
\DeclareMathOperator{\Gal}{Gal}
\DeclareMathOperator{\GL}{GL}
\DeclareMathOperator{\Tr}{Tr}
\DeclareMathOperator{\Frob}{Frob}

\keywords{sum of consecutive powers, exponential equations, modular method, linear forms in logarithms, Thue equation}
\subjclass[2010]{Primary 11D41, Secondary 11G10}

\date{\today}

\begin{document}
\title{Sum of consecutive powers as a perfect power}

\author{Angelos Koutsianas}
\address{Department of Mathematics, Aristotle University of Thessaloniki\\
School of Science, 3rd floor, office 17, 54124, Thessaloniki, Greece} 
\email{akoutsianas@math.auth.gr}

\author{Nikos Tzanakis}
\address{Department of Mathematics \& Applied Mathematics \\
University of Crete, Heraklion, Crete, Greece} 
\email{tzanakis@uoc.gr}

\begin{abstract}
In this paper we study the equation
\begin{equation*}
    x^k + (x+1)^k = y^n,\quad n\geq 3,
\end{equation*}
when $k\equiv 2\pmod{4}$. We prove that the only solutions are for $x=0, -1$ when $6\leq k\leq 100$ or for a $k$ with odd prime factors congruent to $3\pmod{4}$. We use linear forms in logarithms, the modular method and the resolution of Thue equations. 
\end{abstract}

\maketitle

\section{Introduction}

A fundamental question in Diophantine equations is the determination of the sums of consecutive perfect powers that are a perfect power. For example, Euler \cite[art. 249]{Euler59} showed the very beautiful relation
\begin{equation*}
    6^3 = 3^3 + 4^3 + 5^3.
\end{equation*}
In the last few decades, many mathematicians have tried to get similar to Euler's above relation, with a variety of different techniques, by studying the equation
\begin{equation}\label{eq:main_general}
    x^k + (x+ 1)^k + \cdots + (x + d-1)^k = y^n,\quad n\geq 2,
\end{equation}
for different values of $k$ and $d$.

When $k=2$ the cases $2\leq d\leq 10$ have been solved by Cohn, Patel and Zhang \cite{Cohn96, Zhang14, Patel18}. The case $k=3$ and $2\leq d\leq 50$ has been resolved by many \cite{Uchiyama79, Cassels85, Stroeker95, Zhang14, BennettPatelSiksek17}. For $k\geq 4$ less are known. The case $k=4$ and $d=2$ is due to the deep results of Ellenberg about the image of the Galois representations of $\QQ$-curves over imaginary quadratic fields and the resolution of the generalized Fermat equation of signature $(2,4,n)$ \cite{Ellenberg04, BennettEllenbergNg10}. For $k=4,5,6$ and $d=3$ we have the results of Zhang and Bennett-Patel-Siksek \cite{Zhang14, BennettPatelSiksek16}. Finally, for higher values of $k$, the asymptotic theorem of Patel and Siksek states that equation \eqref{eq:main_general} has no solutions for almost all $d\geq 2$ when $k$ is even \cite{PatelSiksek17}. We recommend the survey \cite{CoppolaCurcoKhawajaPatelUlkem24} for a detailed exposition of the known results about \eqref{eq:main_general} and its generalizations.

Apart from the cases $k=2,3,4$ there are no results for $d=2$. This is not a coincidence and has a very simple but deep explanation. For $d=2$ the equation \eqref{eq:main_general} is a special case of the generalized Fermat equation of signature $(r,r,p)$. From Darmon's program \cite{Darmon00} we know the existence of the obstructions $(0, 1, 1)$ and $(1, 0, 1)$ that correspond to non-singular Frey hyperelliptic curves with complex multiplication. We could isolate the two obstructions if there was an analogous to Darmon-Merel results \cite{DarmonMerel97} for curves of genus greater than $1$ but this is far beyond the current technology. The same obstructions, and their difficulties, appear in our case for $x=0, -1$ and $|y|=1$.

In this paper, we consider the case $d=2$ and study the equation 
\begin{equation}\label{eq:main}
    x^k + (x+1)^k = y^n, \quad n\geq 3, 
\end{equation}
for $k\equiv 2\pmod{4}$. To the best of our knowledge, it is the first attempt to solve \eqref{eq:main_general} for $d=2$ and big values of $k$. Even though the mentioned obstructions still exist, we succeed in overcoming all of the above difficulties by reducing the resolution of \eqref{eq:main} to the resolution of a finite number of Lebesgue-Nagell equations that we can solve. The main result of the paper is the following theorem.

\begin{theorem}\label{thm:main}
Suppose $k\equiv 2\pmod{4}$. If $6\leq k\leq 100$, then the only 
solutions of the Diophantine equation
\begin{equation} \label{eq:initial}
    x^k + (x+1)^k = y^n, \quad n\geq 3,
\end{equation}
result when $x\in\{-1,0\}$.
\end{theorem}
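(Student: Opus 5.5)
Since $k\equiv 2\pmod 4$, write $k=2m$ with $m$ odd. Then $x^k+(x+1)^k=A^m+B^m$ with $A=x^2$, $B=(x+1)^2$. Because $m$ is odd, we can factor
\[
A^m+B^m=(A+B)\,\Phi_m(A,B),\qquad A+B=2x^2+2x+1 .
\]
Here $\Phi_m(A,B)=\sum_{i=0}^{m-1}(-1)^iA^{m-1-i}B^i$. Since $\gcd(x,x+1)=1$, we have $\gcd(A+B,\Phi_m(A,B))\mid m$. The plan is to reduce to the case $n=p$ prime ($n=4$ reduces to the exponent $2$ case, which we can absorb via $n=4$ directly or treat as $p=2$ with a separate argument). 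The factorization then gives
\[
2x^2+2x+1=d\,y_1^p,\qquad \Phi_m(x^2,(x+1)^2)=d'\,y_2^p ,
\]
where $d,d'$ are supported on primes dividing $m$, with bounded exponents. Note that $2x^2+2x+1=\tfrac12\bigl((2x+1)^2+1\bigr)$, so the first equation becomes $(2x+1)^2+1=2d\,y_1^p$. This is a Lebesgue--Nagell type equation, and for each of the finitely many $d$ (those dividing a power of $m$, for $m\le 49$ odd) it can be attacked over $\mathbb{Z}[i]$. Every prime dividing $(2x+1)^2+1$ is $2$ or $\equiv 1\pmod 4$. So primes $q\equiv 3\pmod 4$ dividing $m$ cannot occur in $d$, which explains the second statement in the abstract.

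Next, I would solve $(2x+1)^2+1=2dy_1^p$. Factoring in $\mathbb{Z}[i]$ as $(2x+1+i)=(1+i)\,\delta\,\alpha^p$ (after absorbing units, since $p\nmid 4$ for $p$ odd), we get $2x+1+i=(1+i)\delta(a+bi)^p$. Comparing imaginary parts gives a binary form $F(a,b)=\pm1$ of degree $p$. For small $p$ this is a Thue equation, solvable in Magma/PARI. For large $p$ I would proceed in two steps:
\begin{enumerate}
\item Obtain an upper bound on $p$ from linear forms in logarithms (Laurent/Matveev). The form $\frac{\delta(1+i)\alpha^p}{\bar\delta(1-i)\bar\alpha^p}$ is close to $1$, giving a linear form in two logarithms in $\log(\alpha/\bar\alpha)$ and a fixed logarithm.
\item Alternatively, use the modular method with the Frey curve attached to $X^2+1=2dY^p$, i.e.\ a signature $(p,p,2)$ Frey curve, typically of conductor $2^a\cdot\mathrm{rad}(d)$. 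Level lowering gives newforms of small level. Then rule them out, or show they correspond only to $y_1=1$, via $a_\ell$ comparisons at auxiliary primes $\ell\equiv1\pmod p$ (Kraus's method) for all $p$ below the bound.
\end{enumerate}
The trivial solutions $x\in\{0,-1\}$ give $y_1=1$ (with $d=1$). They correspond exactly to the CM obstruction mentioned in the introduction, so the modular step will typically leave one newform (CM by $\mathbb{Q}(i)$) that cannot be eliminated.

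For that obstruction I would switch to the second factor, or combine information. One option is to use the Thue equations from the $\mathbb{Z}[i]$ factorization for every $p$ up to the bound. Another is to use the linear-forms bound, which is genuinely effective and handles the CM form, followed by a reduction step (continued fractions/LLL) to bring $p$ down to a range where the Thue equations $F(a,b)=\pm1$ are solved directly. Indeed, since $|y_1|>1$ forces $|b|$ or $|a|$ large, Bugeaud--Mignotte--Siksek style bounds for Lebesgue--Nagell equations $X^2+1=2dY^p$ should give $p$ at most a few hundred or thousand. For the remaining $p$ one proves, via a Kraus-type sieve modulo $\ell=2kp+1$, that $F(a,b)\equiv\pm1$ together with the second factor equation has only the residues coming from $x\in\{0,-1\}$.

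The main obstacle I expect is this last step: excluding large $p$ uniformly for the CM obstruction, i.e.\ making the linear form in logarithms bound small enough and then handling each $p$ up to it. My first attempt would be to do the sieve with both equations simultaneously. I would take $2x+1$ modulo $\ell$, require $(2x+1)^2+1$ to be $2d$ times a $p$-th power and $\Phi_m$ to be $d'$ times a $p$-th power, and keep only $x\equiv0,-1\pmod\ell$. Intersecting over several $\ell$ would then force $x\in\{0,-1\}$ by a bounded-height argument from the Thue solution.
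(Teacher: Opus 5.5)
\paragraph{The gap: the case $d=1$.} You flag the trivial-gcd case $d=1$ as the main obstacle yourself, and your proposal does not resolve it. For the thirteen values of $k$ with no prime $\equiv 1\pmod 4$ dividing $k/2$ (e.g.\ $k=6,14,18,22$) this case is the whole problem.

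When $d=1$, $x\in\{0,-1\}$ satisfies both of your factor equations with $y_1=y_2=1$, for every $p$. So in any Kraus-type sieve the residues $x\equiv 0,-1\pmod{\ell}$ always pass. Moreover, the newform attached to the Frey curve of this genuine solution always has the same $a_\ell$ as the reduced Frey curve at those residues. No local test, using one factor or both, can remove it.

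None of your proposed ways out closes this:
\begin{itemize}
\item For fixed $p$ there is no practical upper bound on $|x|$ that would turn the congruences $x\equiv 0,-1 \pmod{\ell}$ into $x\in\{0,-1\}$. In your linear form the factor $\log y_1$ cancels, so it bounds $p$ but not $y_1$. Baker-type bounds for a degree-$p$ Thue equation are astronomically large.
\item Continued-fraction or LLL reduction needs fixed logarithms, but $\alpha/\bar\alpha$ varies with the solution.
\item Solving the degree-$p$ Thue equations for every prime $p$ up to the bound is computationally out of reach.
\end{itemize}
Incidentally, the obstructing form is not CM. In the paper's normalization the Frey curve at the trivial solution is $Y^2=X^3+2X^2+2X$, which has $j=128$; it is an ordinary rational newform.

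\paragraph{What is missing.} The case $d=1$ is exactly the classical equation $X^2+1=2Y^n$, which is completely solved (Theorem~\ref{thm:main_k2}). Its only solutions are $|X|=1$ and $(239,13,4)$.

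You then still have to exclude $X=239$, $n=4$, i.e.\ show that $119^k+120^k$ is never a fourth power for $k\equiv 2\pmod 4$, $k\ge 6$. The paper does this by parametrizing the primitive Pythagorean triple $(119^{k/2},120^{k/2},y^2)$ and ruling out each resulting case modulo $8$ and $16$.

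Your treatment of $n=4$ in general is also only gestured at. The paper solves quartic Thue equations via Lemma~\ref{lem:factorization_without_condition}, because the clean splitting of Lemma~\ref{lem:equations_system} needs $n>2v_p(k)$, and that fails for $k=50$, $n=4$.

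\paragraph{The case $d>1$.} Once $d=1$ is disposed of, the equations with $d>1$ carry no trivial solution; the few candidates with $y_1=1$ are checked directly. That is precisely why the Frey curve plus Kraus sieve succeeds there, and for this part your outline matches the paper.

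The one difference is how $p$ is bounded. You use a linear form in $\log(\alpha/\bar\alpha)$ from the first factor. The paper instead compares $f_k(x^2)$ with $(x^2+1)^{k/2-1}$, which gives a two-logarithm form in the rationals $y_1^{k/2-1}/y_2$ and $A=2^{k/2-1}(d_1/d_2)^{k/2}\neq 1$. Either route bounds $p$ once $y_1>1$.
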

Note that, if $(x,y,n)$ is a solution with $x\le -2$, then 
$(|x|-1,y,n)$ is also a solution, therefore we may assume that
$x\ge 1$ and, consequently, $y>1$ and odd.
By changing the variable $x$ (and leaving unaltered $y$) we can express the consecutive positive integers 
$x,x+1$ as $(x-1)/2, (x+1)/2$, where $x$ is an odd integer with $x > 1$. Therefore, our equation becomes
\begin{equation}\label{eq:main_symmetric}
   \frac{(x-1)^k + (x+1)^k}{2} = 2^{k-1}y^n, \quad 
   x,y\ge 3\text{ odd},\quad n\ge 3,
\end{equation}
which is symmetric on the left-hand side with respect to $x$ around $0$. 

For the rest of the paper, we solve \eqref{eq:main_symmetric} instead of \eqref{eq:main} for $n=4$ or $n$ is an odd prime. In the study of \eqref{eq:main_symmetric} we use linear forms in logarithms to get an upper bound $n_0$ of $n$, the modular method to prove that there are no other solutions of \eqref{eq:main_symmetric} apart from those with $y=1$ and the resolution of Thue equations when $n\leq 7$.

It is important to mention that the case $k=2$ of \eqref{eq:main_symmetric} has been studied in \cite{Cohn96, MuriefahLucaSiksekTengely09} and we have the following result which we also use for $k\geq 6$.
\begin{theorem}\label{thm:main_k2}
The only solutions of the Diophantine equation
\begin{equation}\label{eq:main_k2}
    x^2 + 1 = 2y^n,\quad n\geq 3,
\end{equation}
are $(|x|, y, n)=(1, 1, n),~(239, 13, 4)$.
\end{theorem}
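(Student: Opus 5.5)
The plan is to reduce to $n=4$ and to $n=p$ an odd prime. This suffices because every $n\ge 3$ is divisible by $4$ or by an odd prime, and a solution with exponent $n$ gives one with exponent $4$ or $p$ after replacing $y$ by a power of $y$. Since $x^2+1\equiv 1,2\pmod 4$, $x$ must be odd. Only $x\ge 0$ matters, by the symmetry $x\mapsto -x$.

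For $n=4$ the equation is $x^2+1=2y^4$. This is Ljunggren's equation, and I would invoke his theorem, or its simplified proof by Steiner and Tzanakis via linear forms in elliptic logarithms / Thue equations. It gives $x\in\{1,239\}$, which yields $(1,1,4)$ and $(239,13,4)$.

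For $n=p$ odd prime I would work in $\ZZ[i]$, where $(x+i)(x-i)=2y^p$. Since $x$ is odd, $\gcd(x+i,x-i)=(1+i)$, and each of $x\pm i$ is exactly divisible by $(1+i)$ to the first power. Unique factorisation then gives $x+i=\varepsilon(1+i)\alpha^p$ with $\alpha=a+bi$, $y=a^2+b^2$ and $\varepsilon$ a unit. Because $p$ is odd, $\varepsilon$ is a $p$-th power, so it can be absorbed into $\alpha$. Comparing imaginary parts, and using $\operatorname{Im}((1+i)z)=\operatorname{Re}z+\operatorname{Im}z$, gives
\begin{equation*}
\frac{\alpha^p+\bar\alpha^p}{2}+\frac{\alpha^p-\bar\alpha^p}{2i}=1 .
\end{equation*}
Set $\beta=\alpha\zeta$, where $\zeta$ is a suitable primitive $8$-th root of unity with $\zeta^p$ compensating the factor $(1+i)/\sqrt2$. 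Then the identity becomes
\begin{equation*}
\frac{\beta^p-\bar\beta^p}{\beta-\bar\beta}=\pm\frac{1}{\,b\mp a\,}\cdot(\text{unit}),
\end{equation*}
which forces $a\pm b=\pm1$ and $u_p(\beta,\bar\beta)=\pm1$ for the associated Lehmer sequence. Here $(\beta+\bar\beta)^2$ and $\beta\bar\beta$ are coprime integers up to a factor $2$, so the pair is a Lehmer pair.

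The key step is to apply the Bilu--Hanrot--Voutier primitive divisor theorem. A term $u_p=\pm1$ has no primitive divisor, so either $p\le 30$ or $(\beta,\bar\beta)$ lies in the explicit finite list of defective Lehmer pairs. Checking that list yields only $|\alpha|=1$, that is $y=1$ and $x=1$. For the remaining primes $3\le p\le 29$, the relation $a\pm b=\pm1$ reduces the identity to a polynomial equation in one variable. Equivalently, one has the binomial Thue equation $\operatorname{Im}((1+i)(a+bi)^p)=1$ of degree $p$, and I would solve it directly, for instance in Magma or PARI. The expected outcome is only $a^2+b^2=1$.

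The main obstacle is making the normalisation to a genuine Lehmer pair precise: choosing $\zeta$ correctly, and verifying coprimality and non-degeneracy of $\beta/\bar\beta$, so that Bilu--Hanrot--Voutier applies. If this proves awkward, the fallback is to cite Cohn's result together with Muriefah--Luca--Siksek--Tengely directly, as the statement is attributed to them.
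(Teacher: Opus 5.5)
Your argument is correct, but it is not the paper's route. The paper gives no proof of this theorem; it simply quotes it from \cite{Cohn96} and \cite{MuriefahLucaSiksekTengely09}. You instead rebuild it from Ljunggren's theorem (for $4\mid n$) and the Bilu--Hanrot--Voutier theorem (for an odd prime $p\mid n$).

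The normalisation you flag as the main obstacle does go through:
\begin{itemize}
\item Take $\zeta=(\epsilon_1+\epsilon_2 i)/\sqrt2$ with $\zeta^p=(1+i)/\sqrt2$, and set $\beta=\zeta\alpha$. Then $\sqrt2\,\beta^p=x+i$, so $\beta^p-\bar\beta^p=\sqrt2\, i$.
\item On the other hand, $\beta-\bar\beta=\sqrt2\, i(\epsilon_2a+\epsilon_1b)$, $(\beta+\bar\beta)^2=2c^2$ with $c=\epsilon_1a-\epsilon_2b$, and $\beta\bar\beta=y$.
\item Since $\gcd(a,b)=1$ and $y$ is odd, $2c^2$ and $y$ are nonzero and coprime. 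Moreover, $\beta/\bar\beta$ is a root of unity only when $\alpha$ is a unit, i.e.\ $y=1$.
\end{itemize}
So for $y>1$ you have a genuine Lehmer pair with $u_p=1/(\epsilon_2a+\epsilon_1b)=\pm1$, and BHV gives $p\le 29$.

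Three things need tidying:
\begin{itemize}
\item The case $y=1$ must be set aside before BHV is invoked. Your sentence ``checking that list yields only $|\alpha|=1$'' is therefore not meaningful.
\item The finite tables of defective Lehmer pairs only cover $n\ge 7$; for $p=3,5$ there are infinite parametric families. So it is your direct computation, not the tables, that disposes of $p\le 29$. That computation is elementary: once $\epsilon_2a+\epsilon_1b=\pm1$, you have $y=(c^2+1)/2$, so $u_p$ is a polynomial in $c$ alone. For example, $u_3=(3c^2-1)/2$ and $u_5=(5c^4-10c^2+1)/4$. No Thue solver is needed.
\item When $4\mid n$, you need the fact that $13$ is not a perfect power to get back to $n=4$.
\end{itemize}

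What your route buys is transparency. The only non-trivial solution comes from Ljunggren's equation, and every odd exponent is killed by primitive divisors. Your Gaussian-integer step is the same factorisation the paper itself performs in Section~\ref{sec:small_n}, where the coefficient $1+i$ is replaced by an element of norm $2a$. The paper's citation is simply more economical, and your stated fallback coincides with it.
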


Theorem \ref{thm:main_k2} and 
Lemma \ref{lem:factorization_without_condition} imply the 
theorem below, whose proof is given immediately after the proof of
Lemma \ref{lem:factorization_without_condition}.
\begin{theorem}\label{thm:main_infinite_family}
If $k\equiv 2\pmod{4}$ with all its odd prime factors  
congruent to $3\pmod{4}$, then the solutions of the  Diophantine 
equation
\begin{equation}\label{eq:main_infinite_family}
    x^k + (x+1)^k = y^n, \quad n\geq 3, 
\end{equation}
 are $(x, y, n, k) = (0, 1, n, k),~(119, 13, 4, 2),~(-120, 13, 4, 2)$.
\end{theorem}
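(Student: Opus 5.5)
Write $k=2m$ with $m$ odd, all of whose prime factors are $\equiv 3\pmod 4$. We work with the symmetric form \eqref{eq:main_symmetric}. Set $X=x-1$, $Y=x+1$ with $x$ odd, so $X,Y$ are consecutive even numbers and $u=X/2$, $v=Y/2$ are coprime. The idea is to factor $u^{2m}+v^{2m}$ over $\QQ$ and isolate the factor $u^2+v^2$, to which Theorem \ref{thm:main_k2} applies.

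\textbf{Step 1: factorization.} Since $m$ is odd, put $a=u^2$ and $b=v^2$. Then
\[
u^{2m}+v^{2m}=a^m+b^m=(a+b)\,\Phi(a,b),\qquad \Phi(a,b)=\frac{a^m+b^m}{a+b}.
\]
By the standard gcd argument, $\gcd(a+b,\Phi(a,b))$ divides $m$. This is presumably the content of Lemma \ref{lem:factorization_without_condition}: any common prime $p$ divides $m$ and also divides $a+b=u^2+v^2$. Because $\gcd(u,v)=1$, every odd prime divisor of $u^2+v^2$ is $\equiv 1\pmod 4$. The primes of $m$ are all $\equiv 3\pmod 4$, so they cannot divide $u^2+v^2$. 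Hence $u^2+v^2$ and $\Phi$ are coprime.

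\textbf{Step 2: reduce to Theorem \ref{thm:main_k2}.} From $u^{2m}+v^{2m}=y^n$ with $u,v$ coprime, $y$ is odd. The two coprime positive factors multiply to $y^n$, so each is an $n$-th power; in particular $u^2+v^2=z^n$ for some $z$ dividing $y$. Since $u$ and $v=u+1$ are consecutive, $(u+v)^2+(v-u)^2=2(u^2+v^2)$, so with $w=2u+1$ we get $w^2+1=2z^n$. Theorem \ref{thm:main_k2} now gives only two possibilities.

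\textbf{Step 3: the two cases.} If $|w|=1$, then $u\in\{0,-1\}$, which gives the solutions with $x\in\{-1,0\}$ and $y=1$. These are $(0,1,n,k)$ and its mirror $(-1,1,n,k)$; the statement's list should be read as including the latter. If $|w|=239$ with $n=4$, then $u=119$ or $u=-120$, and $z=13$. For $m=1$ this is exactly the listed solution. For $m>1$ we must additionally have $\Phi(119^2,120^2)=$ a fourth power, with $y^4=119^{2m}+120^{2m}$ lying strictly between consecutive fourth powers. It suffices, however, to note that $\Phi>1$ must be coprime to $13$ and a fourth power. Comparing sizes gives $y\approx 120^{m/2}$, which cannot be an integer since $m$ is odd, provided $13^4\Phi$ is not a perfect fourth power. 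A cleaner route is modular: show $\Phi(119^2,120^2)$ is not a fourth power modulo a suitable prime, or use $y^4=120^{2m}(1+(119/120)^{2m})$. I expect this last exclusion for general odd $m>1$ to be the main obstacle.

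For that exclusion, I would try first the observation that $n=4$ forces $u^{2m}+v^{2m}=(y^2)^2$. That is, $(u^m)^2+(v^m)^2$ is a square, so $(u^m,v^m,y^2)$ is a primitive Pythagorean triple with $y^2=s^2+t^2$. Combined with Fermat's result that $X^4+Y^4=Z^2$ has no nontrivial solutions, and with the structure of $119$ and $120$, this should rule it out. Alternatively, one can apply Theorem \ref{thm:main_k2} again to a suitable factor.
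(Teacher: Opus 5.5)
Your Steps 1 and 2 are sound and follow the same reduction as the paper. Your gcd argument for $u^2+v^2$ and $\Phi(u^2,v^2)$ is a clean, self-contained substitute for Lemma \ref{lem:factorization_without_condition}. It yields $x^2+1=2z^n$, and Theorem \ref{thm:main_k2} then leaves only $x\in\{-1,0\}$ or $|x|=239$ with $n=4$. You are also right that $(-1,1,n,k)$ belongs on the list.

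The gap is the last step. For odd $m>1$ you never show that $119^{2m}+120^{2m}$ is not a fourth power, and none of your suggested routes works as stated.
\begin{itemize}
\item The size comparison is not an argument. $y^4=120^{2m}\bigl(1+(119/120)^{2m}\bigr)$ only places $y$ in $\bigl(120^{m/2},2^{1/4}120^{m/2}\bigr)$, an interval containing many integers; the irrationality of $120^{m/2}$ is irrelevant.
\item Saying $y^4$ lies ``strictly between consecutive fourth powers'' is exactly what needs proof, not a step toward it.
\item Fermat's theorem on $X^4+Y^4=Z^2$ does not apply, since $119^m$ and $120^m$ are not squares for odd $m$. Indeed $A^2+B^2=C^4$ does have solutions, e.g.\ the case $m=1$.
\item A congruence obstruction ``modulo a suitable prime'' would have to be uniform in $m$, and you do not supply one.
\end{itemize}

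The missing idea is to use the primitive Pythagorean triple $(119^m,120^m,y^2)$ you wrote down through its legs, not its hypotenuse. There are coprime $s>t>0$ of opposite parity with $s^2-t^2=119^m$ and $2st=120^m$.
\begin{itemize}
\item Since $\gcd(s+t,s-t)=1$, the pair $(s+t,s-t)$ is $(119^m,1)$ or $(17^m,7^m)$.
\item Reducing mod $4$ shows $s$ is even in both cases. Then $st=2^{3m-1}15^m$ forces $s=2^{3m-1}15^m/t$ with $t\in\{15^m,5^m,3^m,1\}$, and $t=15^m$ violates $s>t$.
\item In the first case, reducing mod $8$ gives $t\equiv -1\pmod 4$, so $t=3^m$. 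This leads to $2^{3m-1}5^m=3^m+1$, whose right side is $\equiv 4\pmod 8$ while the left is $\equiv 0$.
\item In the second case, mod $8$ gives $t\equiv 1\pmod 4$ and mod $16$ refines this to $t\equiv 5\pmod 8$, so $t=5^m$. This leads to $17^m=2^{3m-1}3^m+5^m$, impossible mod $8$ since the sides are $\equiv 1$ and $\equiv 5$.
\end{itemize}
This is how the paper closes the case. Note that it shows $119^{2m}+120^{2m}$ is not even a square, so neither the fourth-power structure nor $\Phi$ is needed at this stage.
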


The paper is organized as follows. In Section \ref{sec:modular_method} we recall the basic terminology and results about the modular method. In Section \ref{sec:preliminaries} we prove the key Lemma \ref{lem:equations_system} which shows that \eqref{eq:main_symmetric} is reduced to a finite number of systems of two equations with the first one being a Lebesgue-Nagell equation. At the end of this section we also prove Theorem \ref{thm:main_infinite_family}. In Section \ref{sec:small_n} we show that, for fixed $n$, the resolution of \eqref{eq:main_symmetric} is reduced to the resolution of a finite number of Thue equations, which we solve for $n\leq 7$. In Section \ref{sec:k_big_values} we apply the modular method and prove that there are no solutions for $n\leq n_0$ where $n_0$ is given in Table \ref{table:bound_n}. Finally, in Section \ref{sec:upper_bound_n} we use linear forms in logarithms to prove that there are no solutions with $y>1$ when $n > n_0$.

The computations were carried out on a machine running Ubuntu 24.04.4 with 
an Intel Core i5-10400, $12$ core CPUs at 2.90GHz and 24 GB RAM. For the 
parallel computations we used the $10$ cpus. The code and details how to 
rerun the computations can be found in 
\href{https://github.com/akoutsianas/SumOfConsecutivePowers}{https://github.com/akoutsianas/SumOfConsecutivePowers}.

\begin{quote}
  \emph{For the remainder of the paper, we implicitly assume that} $k\equiv 2\pmod{4}$. 
\end{quote}

\section{Acknowledgments}

The first author was supported by the Special Account for Research Funds AUTH research grant ``\textit{Solving Diophantine Equations-10349}''.

\section{Modular Method}\label{sec:modular_method}

The proof of Theorem \ref{thm:main} for $k\geq 3$ is based, to a
large extent, on the modular method. We use Frey-Hellegouarch curves, modularity, Galois representations and level lowering \cite{Ribet90, Wiles95, TaylorWiles95, BreuilConradDiamondTaylor01, BennettSkinner04, BugeaudMignotteSiksek06}. In this section, we briefly recall terminology and results about the modular method that we use in Section \ref{sec:k_big_values}.

Suppose $f$ is a cuspidal newform of weight $2$, trivial character and level $N_f$ with $q$-expansion
\begin{equation*}
    f = q + \sum_{i=2}^{\infty} a_i(f)q^i.
\end{equation*}
We denote by $K_f$ the \textit{eigenvalue field} of $f$ and say that $f$ is \textit{irrational} if $[K_f:\QQ]>1$ and \textit{rational} otherwise. For a rational prime $n$ we denote by $\fn$ a prime ideal of $K_f$ above $n$. We associate to $f$ a continuous, semisimple Galois representation
\begin{equation*}
    \rhof:~\Gal(\bQQ/\QQ)\rightarrow\GL_2(\bF_n),
\end{equation*}
that is unramified at all primes $\ell\nmid nN_f$ and 
$\Tr(\rhof(\Frob_{\ell}))\equiv a_\ell(f)\pmod{\fn}$, 
where $\Frob_{\ell}$ is a Frobenius element at $\ell$. 
If $f$ is rational we use the notation $\rhofn$ and denote $E_f$ an elliptic 
curve over $\QQ$ corresponding to $f$ due to modularity.

Suppose $E$ is an elliptic curve over $\QQ$ with conductor $N_E$. For a prime 
$\ell \nmid N_E$, let $a_{\ell}(E)=\ell + 1 - \#\tilde{E}(\Fl):=a_{\ell}(\tilde{E})$, where $\tilde{E}$ is the reduction of $E$ at $\ell$. We also denote by $\rhoE$ the Galois representation of $\Gal(\bQQ/\QQ)$ on the $n$-th torsion subgroup of $E$.

\begin{proposition}\label{prop:elimination_step}
With the above notation we assume:  $\rhoE\simeq \rhof$ for a prime ideal $\fn\mid n$ of $K_f$ and $l$ is a prime. Then, the following hold:
\begin{enumerate}[(i)]
    \item if $\ell\nmid nN_EN_f$, then $a_\ell(f)\equiv a_\ell(E)\pmod{\fn}$,
    \item if $\ell\nmid nN_f$ and $\ell\| N_E$, then $\ell + 1\equiv\pm a_\ell(f)\pmod{\fn}$.
\end{enumerate}
If $f$, as above, is rational, then we have
\begin{enumerate}[(i)]
    \item if $\ell\nmid N_EN_{E_f}$, then $a_\ell(E_f)\equiv a_\ell(E)\pmod{n}$,
    \item if $\ell\nmid N_f$ and $\ell\| N_{E_f}$, then $\ell + 1\equiv\pm a_\ell(f)\pmod{n}$,
\end{enumerate}
where $E_f$ is an associate to $f$ elliptic curve over $\QQ$ due to modularity and $N_{E_f}=N_f$ is the conductor of $E_f$.
\end{proposition}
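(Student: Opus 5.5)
The plan is to compare traces of Frobenius on both sides of the isomorphism $\rhoE\simeq\rhof$, using the standard description of the local Galois representation of an elliptic curve at a prime $\ell\neq n$. Throughout, $\ell\neq n$ is required so that the $n$-torsion representation is unramified at primes of good reduction (Néron--Ogg--Shafarevich) and so that the Tate-curve description applies at multiplicative primes.

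\emph{Good reduction case.} Let $\ell\nmid nN_EN_f$. Then $\rhoE$ is unramified at $\ell$. The characteristic polynomial of $\rhoE(\Frob_\ell)$ is $X^2-a_\ell(E)X+\ell \bmod n$, because the Frobenius endomorphism of $\tilde E$ acts on $\tilde E[n]\cong E[n]$ with trace $\ell+1-\#\tilde E(\Fl)$. On the other side, $\rhof$ is unramified at $\ell$ with $\Tr(\rhof(\Frob_\ell))\equiv a_\ell(f)\pmod{\fn}$. Isomorphic representations have equal traces, and the traces live in $\bF_n$, into which we embed $\mathcal O_{K_f}/\fn$. This gives $a_\ell(f)\equiv a_\ell(E)\pmod{\fn}$.

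\emph{Multiplicative reduction case.} Let $\ell\nmid nN_f$ and $\ell\| N_E$, so $E$ has multiplicative reduction at $\ell$. By Tate's uniformization over the unramified quadratic extension of $\QQ_\ell$, $\rhoE|_{G_{\QQ_\ell}}$ is an extension of $\epsilon$ by $\epsilon\chi_n$. Here $\chi_n$ is the mod $n$ cyclotomic character and $\epsilon$ is trivial or the unramified quadratic character, according to split or nonsplit reduction. A priori this piece may be ramified, via the Tate parameter, but its semisimplification is unramified. Since $\ell\nmid nN_f$, $\rhof$ is unramified at $\ell$, so $\rhoE$ is unramified at $\ell$ as well, hence its restriction is the unramified semisimple representation $\epsilon\chi_n\oplus\epsilon$. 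Evaluating the trace at $\Frob_\ell$ gives $\epsilon(\Frob_\ell)(\ell+1)=\pm(\ell+1)$, and comparing with $a_\ell(f)$ yields $\ell+1\equiv\pm a_\ell(f)\pmod{\fn}$. This is the only step with genuine content, namely the Tate-curve description and the semisimplification trace computation; the rest is bookkeeping.

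\emph{Rational case.} When $K_f=\QQ$ we take $\fn=(n)$, and $\rhof=\rho_{E_f,n}$, with $a_\ell(f)=a_\ell(E_f)$ for $\ell\nmid N_f$ by Eichler--Shimura and modularity, and $N_{E_f}=N_f$. The two statements then follow in the same way. The first uses $\ell\nmid N_EN_{E_f}$ together with $\ell\neq n$, an assumption I would add, or justify via the standard reduction at $\ell=n$ when both curves have good ordinary reduction. The second is the multiplicative case with the roles of the two curves arranged as stated: for $\ell\| N_{E_f}$ one applies the Tate-curve argument to whichever curve has multiplicative reduction and compares with the unramified side. I would need to check the hypotheses carefully so that exactly one side is multiplicative and the other unramified at $\ell$; as literally printed the indices look swapped relative to the irrational case, and I would read them as the analogue of (ii) above.
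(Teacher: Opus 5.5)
\textbf{What is right.} The paper gives no proof here; it cites Serre, Bennett--Skinner and, for rational $f$, Kraus--Oesterl\'e. Your argument for the irrational statements (i) and (ii), and for the rational ones when $\ell\neq n$, is the standard Frobenius-trace comparison behind those references, and it is sound.

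There is one small inaccuracy. In the multiplicative case $\rhoE|_{G_{\QQ_\ell}}$ need not be semisimple: for $\ell\equiv 1\pmod n$ it can be a non-split unramified extension. This is harmless, because the trace depends only on the semisimplification.

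You are also right that the printed rational (ii) is vacuous, since $N_{E_f}=N_f$. The intended statement is that $\ell\nmid N_{E_f}$ and $\ell\| N_E$ imply $\ell+1\equiv\pm a_\ell(E_f)\pmod n$.

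\textbf{The gap: $\ell=n$ in the rational case.} The separate citation of Kraus--Oesterl\'e exists to drop the condition $\ell\nmid n$: rational (i) assumes only $\ell\nmid N_EN_{E_f}$. Adding $\ell\neq n$ as a hypothesis proves a weaker proposition. Your suggested repair (both curves good ordinary at $n$) covers only one sub-case and does not exclude the mixed ones.

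The missing input is the local shape of $\rho_{E,n}|_{G_{\QQ_n}}$ for odd $n$:
\begin{itemize}
\item Good ordinary reduction: the representation is reducible with semisimplification $\chi_n\lambda(u)^{-1}\oplus\lambda(u)$. Here $\lambda(u)$ is unramified with $\lambda(u)(\Frob_n)=u\equiv a_n(E)\pmod n$ (connected--\'etale sequence plus the Weil pairing).
\item Good supersingular reduction: tame inertia acts through the level-$2$ fundamental characters $\psi_2,\psi_2^n$. So the representation is irreducible, and $a_n(E)\equiv 0\pmod n$.
\item Multiplicative reduction: the semisimplification is $\epsilon\chi_n\oplus\epsilon$.
\end{itemize}
Since $\chi_n$ is ramified at $n$, the unramified constituent is an invariant of the isomorphism class.

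Hence $\rhoE\simeq\rho_{E_f,n}$ forces either both curves ordinary with the same $u \bmod n$, or both supersingular. Either way $a_n(E)\equiv a_n(E_f)\pmod n$.

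Likewise, in the corrected (ii) with $\ell=n$, $E$ multiplicative forces $E_f$ to be ordinary with $\lambda(u')=\epsilon$. That is, $a_n(E_f)\equiv\pm1\equiv\pm(n+1)\pmod n$.
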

The above proposition is standard (see \cite[Proposition 4.3]{BennettSkinner04} or \cite[p. 203]{Serre87}; in the case that $f$ is rational, see \cite[Lemma 6.4]{BugeaudMignotteSiksek06} or \cite[Proposition 3]{KrausOesterle92}), and we apply it to bound $n$, when $a_\ell(f)- a_\ell(E)\neq 0$, using a 
range of different primes $\ell$. 

\begin{remark}
We recall that, when $f$ is irrational, there are infinitely many primes $\ell$ 
such that $a_{\ell}(f)\not\in\QQ$, which implies that there are infinitely 
many primes $\ell$ for which $a_\ell(f)- a_\ell(E)\neq 0$. Hence, for $f$ 
irrational Proposition \ref{prop:elimination_step} always succeeds.
\end{remark}

\section{Preliminaries}\label{sec:preliminaries}

Define $g_k(t)$ by
\begin{equation}\label{eq:gk}
    g_k(t):=\frac{(t-1)^k + (t+1)^k}{2}.
\end{equation}
The following is a key lemma.
\begin{lemma}\label{lem:quadratic_factor}
We have $g_k(t)\in\ZZ[t]$ and 
\begin{equation*}
    g_k(t) = (t^2 + 1)f_k(t^2),
\end{equation*}
where $f_k(t)\in\ZZ[t]$ and $\deg(f)=\frac{k}{2} - 1$. In particular,
\begin{equation}\label{eq:fk}
    f_k(t) = \sum_{i=0}^{(k/2-1)/2}2^{k/2-(2i+1)}
    \binom{k/2}{2i+1}(t+1)^{2i}t^{(k/2-(2i+1))/2}.
\end{equation}
\end{lemma}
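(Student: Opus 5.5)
The plan is to pass to the variable $t^2$ by writing $k=2m$ with $m=k/2$ odd, which holds because $k\equiv 2\pmod 4$. Then
\begin{equation*}
(t\pm1)^k=\bigl((t\pm1)^2\bigr)^m=(u\pm v)^m,\qquad u:=t^2+1,\quad v:=2t .
\end{equation*}
With this substitution the whole statement should reduce to an identity for binomial expansions, with no divisibility argument needed beyond reading off the result.

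First I would expand both powers binomially and average them. Terms with an odd power of $v$ cancel, which gives
\begin{equation*}
g_k(t)=\frac{(u+v)^m+(u-v)^m}{2}=\sum_{\substack{0\le b\le m\\ b \text{ even}}}\binom{m}{b}u^{m-b}v^{b}.
\end{equation*}
Since $m$ is odd and $b$ is even, the exponent $m-b$ of $u$ is odd, say $m-b=2i+1$ with $0\le i\le (m-1)/2$. Using $\binom{m}{b}=\binom{m}{2i+1}$, this becomes
\begin{equation*}
g_k(t)=u\sum_{i=0}^{(m-1)/2}\binom{m}{2i+1}u^{2i}v^{m-2i-1}.
\end{equation*}

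Next I would rewrite the summand in terms of $t^2$. The exponent $m-2i-1$ is even, so $v^{m-2i-1}=2^{m-2i-1}(t^2)^{(m-2i-1)/2}$. Also $u^{2i}=(t^2+1)^{2i}$. Hence the sum is exactly $f_k(t^2)$, with $f_k$ given by \eqref{eq:fk} once we substitute $m=k/2$. This proves the factorization $g_k(t)=(t^2+1)f_k(t^2)$.

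Integrality then follows immediately. All exponents of $2$ and of $t$ appearing in \eqref{eq:fk} are nonnegative integers and the binomial coefficients are integers, so $f_k\in\ZZ[t]$. Consequently $g_k=(t^2+1)f_k(t^2)\in\ZZ[t]$. Alternatively, integrality of $g_k$ can be seen directly from the displayed even-$b$ sum.

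For the degree, each summand of \eqref{eq:fk} has degree $2i+(m-2i-1)/2$ in $t$. This quantity increases with $i$, so the maximum is attained at $i=(m-1)/2$, giving degree $m-1=k/2-1$. That top term has leading coefficient $\binom{m}{m}=1$, and every coefficient in the sum is positive, so no cancellation can occur. Therefore $\deg f_k=k/2-1$.

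I do not expect a genuine obstacle here. The only step needing care is the parity bookkeeping: $m$ odd forces the exponent of $u$ to be odd, which produces the factor $t^2+1$, and it forces the exponent of $v=2t$ to be even, so that what remains is a polynomial in $t^2$ of the stated shape.
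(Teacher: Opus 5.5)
Your proof is correct and follows the paper's argument: both write $(t\pm1)^k=\bigl((t^2+1)\pm 2t\bigr)^{k/2}$ and expand binomially. Both then use that $k/2$ is odd to cancel the terms with an odd power of $2t$, and factor $t^2+1$ out of the surviving odd powers of $t^2+1$. Your explicit degree check fills in a point the paper leaves implicit: the summand with $i=(k/2-1)/2$ is the unique one of degree $k/2-1$, and its leading coefficient is $1$.
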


\begin{proof}
The fact that $g_k(t)\in\ZZ[t]$ is an immediate consequence of the definition of $g_k(t)$ in \eqref{eq:gk}. Furthermore, an elementary computation shows that $g_k(\sqrt{-1})=0$.

Since $k\equiv 2\pmod 4$, we express $g_k(t)$ as follows. 
\begin{align*}
    g_k(t) & = \frac{1}{2}\left((t^2+1-2t)^{k/2}+(t^2+1+2t)^{k/2}\right) \\
    & =\frac{1}{2}\sum_{i=0}^{k/2}\binom{k/2}{i}(t^2+1)^i2^{k/2-i}
        (1+(-1)^{k/2-i})t^{k/2-i}.
\end{align*}
For even $i$ we have $1+(-1)^{k/2-i}=0$, therefore,  
\begin{align*}
    g_k(t) & = 
    \frac{1}{2}\sum_{i=0}^{(k/2-1)/2}
    \binom{k/2}{2i+1}(t^2+1)^{2i+1}2^{k/2-i}2t^{k/2-(2i+1)} \\
    & =(t^2+1)\sum_{i=0}^{(k/2-1)/2}2^{k/2-(2i+1)}
    \binom{k/2}{2i+1}(t^2+1)^{2i}t^{k/2-(2i+1)}.
\end{align*}
Because $k/2$ is odd we have $(k/2-(2i+1))/2\in\ZZ$.
\end{proof}
The following lemma will be used in Sections \ref{sec:k_big_values} and 
\ref{sec:upper_bound_n}.
\begin{lemma}\label{lem:equations_system}
Let $k\ge 6$ and suppose $(x,y,n)$ is a solution of \eqref{eq:main_symmetric} with $n>2v_p(k)$ for every prime $p\equiv 1\pmod 4$ dividing $k$. Let $\cP$ be the set of primes which divide $k/2$ and are congruent to $1\pmod 4$, if any such primes exist; otherwise set $\cP=\emptyset$. Define $\cD$ to be the set consisting of $1$ and the positive divisors of $k/2$ whose prime divisors belong to $\cP$ (if $\cP\ne\emptyset$). Then
\begin{align}
    x^2 + 1 & = 2\frac{d_2}{d_1}y_1^n,     \label{eq:express_x^2+1}
    \\
    f_k(x^2) & = 2^{k-2}\frac{d_1}{d_2}y_2^n,   \label{eq:express_f_k(x^2)}
\end{align}
where $d_1,d_2\in\cD$ are relatively prime, $y_1,y_2$ are relatively prime positive odd integers and $y_1y_2=y$.
\end{lemma}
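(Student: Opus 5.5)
From Lemma \ref{lem:quadratic_factor}, equation \eqref{eq:main_symmetric} reads
\[
(x^2+1)f_k(x^2)=2^{k-1}y^n .
\]
The plan is to compute $\gcd(x^2+1,f_k(x^2))$ exactly, and then distribute the $n$-th powers between the two factors.

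\emph{Powers of $2$.} Since $x$ is odd, $x^2+1\equiv 2\pmod 4$, so $v_2(x^2+1)=1$. As $y$ is odd, this forces $v_2(f_k(x^2))=k-2$.

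\emph{Odd common primes.} Let $p$ be an odd prime dividing both $x^2+1$ and $f_k(x^2)$. Work modulo $p$ with $t^2\equiv -1$ in formula \eqref{eq:fk}. Every term with $i\ge 1$ contains $(t^2+1)^{2i}$, so
\[
f_k(x^2)\equiv 2^{k/2-1}\tfrac{k}{2}\,x^{k/2-1}\pmod{x^2+1},
\]
and hence $p\mid k/2$. Moreover $p\mid x^2+1$ forces $p\equiv 1\pmod 4$, so $p\in\cP$. To control the valuations, I will use the $p$-adic expansion in the $i\ge 1$ terms: $(x^2+1)^{2i}$ has valuation at least $2i\,v_p(x^2+1)$, and $v_p\binom{k/2}{2i+1}\ge v_p(k/2)-v_p(2i+1)$. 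Write $a=v_p(x^2+1)$ and $b=v_p(k)$.
\begin{itemize}
\item If $a\ge b$ (or more generally when the $i=0$ term dominates), the $i=0$ term has the strictly smallest valuation, so $v_p(f_k(x^2))=b$ exactly.
\item If $a<b$, then every term is divisible by $p^{b'}$ with $b'>a$, so $v_p(f_k(x^2))>a$.
\end{itemize}
The equation now gives $a+v_p(f_k(x^2))=n\,v_p(y)$.

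\emph{Using the hypothesis $n>2b$.} Suppose $p\mid y$; then $a+v_p(f_k)\ge n>2b$. If $a\le b$, then $v_p(f_k)>n-b>b$, so the $i=0$ term cannot dominate. I need to rule this case out, and I expect this bookkeeping to be the main obstacle. My concrete plan for it is a lifting-the-exponent style expression. Since $t^2+1=(t-i)(t+i)$, we have
\[
g_k(t)=\tfrac12\bigl((t+1)^k+(t-1)^k\bigr),
\]
and at $t=x$ the ratio $(x+1)/(x-1)$ is congruent to $\pm i$-type units modulo primes above $p$. Then $v_{\mathfrak p}\bigl(\alpha^k+\beta^k\bigr)=v_{\mathfrak p}(\alpha^2+\beta^2)+v_p(k/2)$ in $\ZZ[i]$, with $\alpha=x+1$ and $\beta=x-1$. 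Concretely, LTE applied to $u^{k/2}+w^{k/2}$ with $u=(x+1)^2$, $w=(x-1)^2$ and $u+w=2(x^2+1)$ yields
\[
v_p(g_k(x))=v_p(x^2+1)+v_p(k/2),
\]
and so $v_p(f_k(x^2))=b$ exactly whenever $p\mid x^2+1$; this is cleaner and I will use it instead of the case analysis above. Consequently $n\,v_p(y)=a+b$. Since $b<n/2$, exactly one of the two factors carries a multiple of $n$ after removing $p^{b}$ from one side: either $a\equiv 0$ and we move $p^b$ to $f_k$, or $a\equiv -b\pmod n$ and we move $p^{b}$ onto $x^2+1$. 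In either case $p^b$ is the full $p$-part of $d_1$ or $d_2$.

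\emph{Assembling.} For $p\in\cP$ with $p\mid\gcd$, there are two possibilities. If $p\nmid y$, then $a=0$, contradicting $p\mid x^2+1$. So $p\mid y$, and since $n>2b$, $v_p(y)$ is positive and $n v_p(y)-b\ge 1$. Set $y_1$ equal to the part of $y$ attached to $x^2+1$ after these adjustments: primes not in $\gcd$ go wholly to one factor, and primes $p\in\cP$ dividing $\gcd$ go to $y_1$, with $p^{b}$ placed as the denominator $d_1$ in \eqref{eq:express_x^2+1}, i.e.\ $x^2+1=2p^{n v_p(y)-b}\cdots$. To match the stated form, write $p^{-b}=p^{n}\cdot p^{-(n-b)}$ when needed; the problem is that $p^{n-b}$ is not a divisor of $k/2$. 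I would therefore instead choose $d_1$ to collect the $p^{v_p(k/2)}$ factors that $f_k$ takes, and $d_2$ those that $x^2+1$ needs, with coprimality coming from each $p$ being assigned to one side. Finally, $y_1$ and $y_2$ are coprime and odd, and $y_1y_2=y$.
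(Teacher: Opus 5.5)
Your proof is correct, and it takes a genuinely different route from the paper.

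\textbf{The paper's route.} The paper never determines $v_p(f_k(x^2))$. It only shows $\gcd(x^2+1,f_k(x^2))=2d$ with $d\mid k/2$, using the same reduction of \eqref{eq:fk} modulo $x^2+1$ as your first step. It then uses $v_p(d)\le v_p(k)<n/2$ together with the coprimality of $(x^2+1)/(2d)$ and $f_k(x^2)/(2d)$. From these it concludes that, for each $p\mid d$, exactly one cofactor carries the positive exponent $nv_p(y)-2v_p(d)$. Since it cannot tell which cofactor, it splits $\cP_0=\cP_1\sqcup\cP_2$ and must let $p^{v_p(d)}$ land in either $d_1$ or $d_2$.

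\textbf{Your route.} You compute the exact value $v_p(f_k(x^2))=v_p(k/2)$ for every odd $p\mid x^2+1$ via LTE. This is legitimate, since such a $p$ cannot divide $x\pm1$. Your abandoned first bullet analysis already gives the same thing with no case split: $2i\,v_p(x^2+1)\ge 2i>v_p(2i+1)$, so the $i=0$ term is always the unique term of minimal valuation.

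\textbf{Cleaning up the assembly.} From $a+b=nv_p(y)$ you always have $a\equiv -b\pmod n$. So your alternative ``$a\equiv 0$'' occurs only when $b=0$. The assignment in your ``Assembling'' paragraph ($p^{b}$ into the denominator $d_1$, $p^{v_p(y)}$ into $y_1$) is already exactly the stated form, with $d_2=1$. Your worry about $p^{n-b}$ is unfounded, and that last paragraph can simply be deleted.

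\textbf{What your route buys.} Your exact valuation gives more than the paper's gcd bound. The choice $d_1=\prod_{p\in\cP,\ p\mid x^2+1}p^{v_p(k/2)}$, $d_2=1$, $y_1=\prod_{p\mid x^2+1,\ p\ \mathrm{odd}}p^{v_p(y)}$ works without invoking $n>2v_p(k)$ at all. Consequently the pairs with $d_2>1$ in Table \ref{table:d1_d2_values}, and also $(5,1)$ for $k=50$, can never arise from a solution. The paper's argument is more elementary, needing only a gcd bound and no exact valuation, at the cost of the hypothesis on $n$ and a weaker conclusion.
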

\begin{proof}
By \eqref{eq:main_symmetric} and  Lemma \ref{lem:quadratic_factor} we have $(x^2+1)f_k(x^2)=2^{k-1}y^n$. Since $x$ is odd, $v_2(x^2+1)=1$, therefore $(x^2+1,f_k(x^2)):=2d$ with $d$ odd. If $d>1$, every prime $p$ dividing $d$ also divides $x^2+1$, therefore $p\equiv 1\pmod 4$. Also,
\begin{equation} \label{eq:eq-system-1}
    \frac{x^2+1}{2d}\cdot\frac{f_k(x^2)}{2d}=2^{k-3}\frac{y^n}{d^2}.
\end{equation}
From \eqref{eq:fk} with $t=x^2$ we get $2d\mid 2^{k/2-1}(k/2)$, hence $d\mid (k/2)$; we have already seen that, if $d>1$, every prime divisor of $d$ is $\equiv 1\pmod 4$, therefore $d\in \cD$. Moreover, $v_p(d)\le v_p(k)<n/2$ for every odd prime $p$, a fact that will be used below.

If $d=1$, then we put $d_1=d_2=1$ and \eqref{eq:eq-system-1} 
immediately implies the relations \eqref{eq:express_x^2+1} and
\eqref{eq:express_f_k(x^2)} with appropriate $y_1,y_2$ as in
the announcement of the lemma.

We consider now the case $d>1$.
From the last displayed equation it is clear that $y^n/d^2\in\ZZ$. 
Let $\cP_0\neq\emptyset$ be the set of primes dividing $d$ 
($\cP_0\subseteq\cP$), and write 
$d=\prod_{p\in \cP_0}p^{e_p}$ for its canonical factorization. 
Then $y=y'\prod_{p\in \cP_0}p^{m_p}$  where $m_p:=v_{p}(y)>0$ and  
$(y',\prod_{p\in \cP_0}p)=1$. Consequently, by \eqref{eq:eq-system-1},
\begin{equation} \label{eq:eq-system-2}
    \frac{x^2+1}{2d}\cdot\frac{f_k(x^2)}{2d}= 
 2^{k-3}y'^n\prod_{p\in \cP_0}p^{nm_p-2e_p}
=2^{k-3}\prod_{p\in \cP_0}p^{-2e_p}\cdot
\left(y'\prod_{p\in \cP_0}p^{m_p}\right)^n,
\end{equation}
Note that, for every $p\in \cP_0$, the $p$-exponent on the right-hand side is positive because $m_p>0$ and $n>2v_p(d)=2e_p$. On the other hand, the two factors on the left-hand side are relatively prime, therefore, for every $p\in \cP_0$, precisely one of them has positive $p$-exponent. It follows that we can set  $\cP_0=\cP_1\sqcup \cP_2$, where $\cP_1$ is the set of those primes $p\in \cP_0$ with non-zero $p$-exponent in $(x^2+1)/(2d)$ and, analogously, $\cP_2$ is the set of the primes $p\in \cP_0$ whose $p$-exponent in $f_k(x^2)/(2d)$ is non-zero. Note that $\cP_1$ or $\cP_2$ (but not both) may be empty. Now, from \eqref{eq:eq-system-2} it follows that 

\begin{equation}  \label{eq:express_the_2_factors}
\frac{x^2+1}{2d}=
\prod_{p\in \cP_1}p^{-2e_p}\cdot
\left(y'_1\prod_{p\in \cP_1}p^{m_p}\right)^n,
\quad
 \frac{f_k(x^2)}{2d} = 2^{k-3}
    \prod_{p\in \cP_2}p^{-2e_p}\cdot
    \left(y'_2\prod_{p\in \cP_2}p^{m_p}\right)^n
\end{equation}
with $(y'_1,y'_2)=1$, $y'_1$ odd and $y'_1y'_2=y'$.
Putting
\begin{equation*}
    y_i = y'_i\prod_{p\in \cP_i}p^{m_p},\quad i=1,2,
\end{equation*}
we see that $y_1$ is odd and
$y_1y_2=y'_1y'_2\prod_{p\in \cP_0}p^{m_p}=y'\prod_{p\in \cP_0}p^{m_p}=y$.
Then, by \eqref{eq:express_the_2_factors},
\begin{equation*}
x^2+1=2d\prod_{p\in\cP_1}p^{-2e_p}y_1^n,\quad
f_k(x^2)=2^{k-3}d\prod_{p\in\cP_2}p^{-2e_p}y_1^n,
\end{equation*}
and, since $d=\prod_{p\in\cP_1\sqcup\cP_2}p^{e_p}$, it is 
straightforward to see that
\begin{equation*}
x^2+1=2\prod_{p\in\cP_1}p^{-e_p}\prod_{p\in\cP_2}p^{e_p}y_1^n,
\quad
f_k(x^2)= 2^{k-3}\prod_{p\in\cP_1}p^{e_p}\prod_{p\in\cP_2}p^{-e_p}y_2^n.
\end{equation*}
We complete the proof by setting 
\begin{equation*}
d_1= \prod_{p\in\cP_1}p^{e_p},\quad d_2 = \prod_{p\in\cP_2}p^{e_p}.
\end{equation*}
\end{proof}
\begin{remark} \label{rem:n>2v_p(k)}
For $6\leq k\leq 100$ we have $v_p(k)=1,2$ for every odd prime factor $p$ of $k$. Therefore, the condition $n>2v_p(k)$ holds for any $n \geq 5$.
\end{remark}
As an application of Lemma \ref{lem:equations_system} we obtain Table \ref{table:d1_d2_values} which will be used in Section \ref{sec:k_big_values}.
\begin{table}[h]
    \centering
    \begin{tabular}{| c | c || c | c | }
        \hline
        $k$ &  $(d_1, d_2)$ & $k$ & $(d_1, d_2)$\\
        \hline
        $10$ & $(1, 5), (5, 1)$ & $70$ &  $(1, 5), (5, 1)$  \\ 
        \hline 
        $26$ &  $(1, 13), (13, 1)$ & $74$ &  $(1,37), (37, 1)$ \\
        \hline
        $30$ &  $(1, 5), (5, 1)$ & $78$ & $(1, 13), (13, 1)$\\
        \hline
        $34$ &  $(1, 17), (17, 1)$ & $82$ & $(1, 41), (41, 1)$\\
        \hline
        $50$ &  $(1, 5), (1, 5^2), (5, 1), (5^2, 1)$ 
        & $90$ & $(1, 5), (5, 1)$\\
        \hline
         $58$ &  $(1, 29), (29, 1)$ &  & \\
         \hline
    \end{tabular}
    \caption{All possible values of the pairs $(d_1,d_2)$ for $6\leq k\leq 100$, satisfying the necessary conditions of Lemma \ref{lem:equations_system} and $d_1d_2>1$.} \label{table:d1_d2_values}
\end{table}

The next lemma proves a ``detail'' that is necessary for the arguments of 
Section \ref{sec:upper_bound_n}.
\begin{lemma}\label{lem:y1_gt_3}
With the assumptions and notations of Lemma \ref{lem:equations_system}, if
$6\leq k\leq 100$ then, in \eqref{eq:express_x^2+1} we have $y_1\ge 3$.
\end{lemma}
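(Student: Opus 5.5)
Since $y_1$ is a positive odd integer, it suffices to show $y_1\neq 1$. So we assume $y_1=1$ and derive a contradiction. Then \eqref{eq:express_x^2+1} reads $x^2+1=2d_2/d_1$. Because $d_1,d_2\in\cD$ are coprime divisors of $k/2$ and $x\ge 3$ is odd, the left-hand side is an integer $\ge 10$. Hence $d_1=1$, and $x^2+1=2d_2$ with $d_2\mid k/2$, $d_2>1$, so we are in one of the cases listed in Table \ref{table:d1_d2_values}. The possible values of $d_2$ for $6\le k\le 100$ are $5,13,17,25,29,37,41$. The condition $x^2+1=2d_2$ with $x$ odd, $x\ge3$, leaves only $d_2=5$ ($x=3$), $d_2=13$ ($x=5$), $d_2=25$ ($x=7$) and $d_2=41$ ($x=9$). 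The cases $d_2=17,29,37$ are impossible, since $33,57,73$ are not squares.

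Each surviving case pins down a specific pair $(k,x)$ with $k$ in a short list. The list is: $x=3$ for $k\in\{10,30,50,70,90\}$; $x=5$ for $k\in\{26,78\}$; $x=7$ for $k=50$; and $x=9$ for $k=82$. For each pair I use \eqref{eq:express_f_k(x^2)} with $d_1=1$:
\begin{equation*}
f_k(x^2)=2^{k-2}y_2^n/d_2 .
\end{equation*}
Equivalently, by Lemma \ref{lem:quadratic_factor} and \eqref{eq:main_symmetric}, $g_k(x)=2^{k-1}y^n$ with $y=y_2$. Since $x$ is fixed, this says that the explicit integer $g_k(x)/2^{k-1}=((x-1)^k+(x+1)^k)/2^k$ must be a perfect $n$-th power with $n\ge 5$, or $n=4$, or $n=3$. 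In the $n=3$ case the hypothesis $n>2v_p(k)$ rules out $k=50$, but for the other $k$ it is allowed.

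I would dispose of each case by a direct check. For $x=3$ the quantity is $(2^k+4^k)/2^k=1+2^k$. For $x=5$ it is $2^k+3^k$. For $x=7$ it is $3^{50}+4^{50}$. For $x=9$ it is $4^{82}+5^{82}$. In each case the claim is that this number is not a perfect $n$-th power for any $n\ge3$.

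For $1+2^k$ this follows from Catalan/Mihăilescu, or more elementarily by factoring $1+2^k$ with $k\equiv 2\pmod 4$. For the others, the cleanest route is to exhibit a prime $p$ dividing the number to exactly the first power. Alternatively, one can compute the factorization (these are numbers of at most about 60 digits, with algebraic factors $a^{k/m}+b^{k/m}$ for odd $m\mid k$) and check that the gcd of the exponents is $\le 2$. For instance, $2^{26}+3^{26}$ is divisible by $13$ (using $a^2+b^2$-type factors), and one checks that $v_{13}$ or some other prime valuation equals $1$.

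The main obstacle is purely this finite verification: one must make sure every listed number has some prime appearing to exponent not divisible by any $n\ge 3$. The natural way is to use the factor $x^2+1=2d_2$, which forces the valuation $v_p(g_k(x))$ for $p\mid d_2$ to be small, and to compare it with the requirement $n\,v_p(y)$. If $v_p(d_2)$ is not a multiple of $n$, this gives a quick contradiction. Indeed, $p\mid y$ forces $v_p(g_k(x))\ge n\ge 3$, while a lifting-the-exponent computation gives $v_p(g_k(x))=v_p(x^2+1)+v_p(k/2)$. This is at most $4$, and it can be checked directly against $n$, with a short computation settling the residual $n\in\{3,4\}$ cases.
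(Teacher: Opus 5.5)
Your proof is correct. It shares the paper's reduction (forcing $d_1=1$ and $x^2+1=2d_2$, which leaves the same nine pairs $(k,x)$) but eliminates those candidates differently.

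The paper machine-checks each $m(k,x)=\tfrac12\bigl((x-1)^k+(x+1)^k\bigr)$, looking for a prime $q$ of small valuation. Your final paragraph instead uses the prime $p\mid d_2$ together with lifting the exponent, and this route in fact makes the rest of your proposal unnecessary: Catalan, the factorizations, the enumeration, and even the restriction $k\le 100$.

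The argument runs as follows. Since $g_k(x)=2^{k-1}y^n$ and $p$ is odd, $n\mid v_p(g_k(x))$. Apply LTE to $a=(x-1)^2$, $b=(x+1)^2$ with odd exponent $k/2$; here $p\nmid ab$ because $p\mid x^2+1$. Using $d_2\mid k/2$ and the standing hypothesis $n>2v_p(k)$, this gives
\begin{equation*}
1\le v_p(g_k(x))=v_p(d_2)+v_p(k)\le 2v_p(k)<n ,
\end{equation*}
which contradicts $n\mid v_p(g_k(x))$.

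So the ``residual $n\in\{3,4\}$ cases'' you leave to a short computation do not actually arise. Valuations $3$ and $4$ occur only for $k=50$ (namely $v_5=3$ at $x=3$ and $v_5=4$ at $x=7$), and there the hypothesis already forces $n\ge 5$. Note that it excludes $n=4$ for $k=50$, not only $n=3$ as you state.

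Your route therefore yields a uniform, computation-free proof valid for every $k\equiv 2\pmod 4$ with $k\ge 6$. The paper's check is tied to the nine explicit integers in the range considered.

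Two small slips. First, $v_{13}(2^{26}+3^{26})=2$, not $1$; this still suffices. Second, ``$v_p(d_2)$ not a multiple of $n$'' is the wrong criterion, because $p$ also divides $f_k(x^2)$, with $v_p(f_k(x^2))=v_p(k)$. The relevant quantity is $v_p(g_k(x))$, as your next sentence correctly says.
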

\begin{proof}
From Remark \ref{rem:n>2v_p(k)} the condition $n>v_p(k)$ is satisfied for all prime factors $p\equiv1\pmod{4}$ of $k$ and $6\leq k\leq 100$. From Lemma \ref{lem:equations_system} we know that $y_1$ is odd. 

Suppose that $y_1=1$. This implies that $x^2 + 1 = 2d_2/d_1$ where $(d_1,d_2)$ are as in Table \ref{table:d1_d2_values} or $d_1=d_2=1$. Since $x>1$, the second alternative is excluded. Because $(d_2, d_1)=1$ and both are odd, we conclude that $d_1=1$, hence $x^2 + 1=2d_2$ with $d_2>1$. It follows that $2d_2-1$ is a square, which occurs exactly in the following cases
\begin{align*}
    (d_2,k,x) = & (5,10,3), (13,26,5), (5,30,3), (5,50,3), (25,50,7), (5,70,3),  \\
    & (13,78,5), (41,82,9), (5, 90,3).
\end{align*}
For every such triple we check whether the integer 
$m(k,x):=\frac{1}{2}\left((x-1)^k+(x+1)^k\right)$ is an $n$-th power with 
$n\ge 3$. For every $m(k,x)$ we easily show that this is not the case, by 
finding a prime $q$ such that $v_q(m(k,x))<3$.
\end{proof}

The condition in Lemma \ref{lem:equations_system} that $n > v_k(p)$ 
for all prime divisors $p$ of $k$ with $p\equiv 1\pmod{4}$ is not 
satisfied in general. The next lemma is an analogous and less explicit 
version of Lemma \ref{lem:equations_system} but without any condition 
on $n$. We use it in the proof Theorem \ref{thm:main_infinite_family}
and in Section \ref{sec:small_n}.

\begin{lemma} \label{lem:factorization_without_condition}
If in the assumptions and notations of Lemma \ref{lem:equations_system} we
remove the condition ``$n>v_k(p)$ for every prime $p\equiv 1\pmod 4$ dividing
$k$'', then we have
\begin{equation} \label{eq:x^2+1=ay1^n}
    x^2 + 1=2a\ty_1^{n},
\end{equation}
where $a$ is odd $> 1$ and $\ty_1$ is an odd positive integer. Moreover, every 
prime divisor $p$ of $a$ divides $k/2$, is congruent to $1\pmod 4$ and satisfies 
$0\leq v_p(a) < n$.
\end{lemma}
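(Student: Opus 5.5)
The plan is to repeat the opening of the proof of Lemma \ref{lem:equations_system} and then absorb the $n$-th power parts of the exceptional primes into $\ty_1$, instead of splitting $d$ between the two factors. As there, \eqref{eq:main_symmetric} and Lemma \ref{lem:quadratic_factor} give $(x^2+1)f_k(x^2)=2^{k-1}y^n$. Since $x$ is odd, $v_2(x^2+1)=1$, so $(x^2+1,f_k(x^2))=2d$ with $d$ odd. Exactly as before, evaluating \eqref{eq:fk} at $t=x^2$ gives $d\mid k/2$. Every prime divisor of $d$ divides $x^2+1$ and is therefore $\equiv 1\pmod 4$. None of this uses a condition on $n$.

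Next I would write $x^2+1=2d\,u$ with $u=(x^2+1)/(2d)$ odd and $\gcd(u, f_k(x^2)/(2d))=1$, and read off the $p$-adic valuation of $x^2+1$ prime by prime from \eqref{eq:eq-system-1}.
\begin{itemize}
\item If $p\nmid d$ is odd, then $p$ divides at most one of the two coprime factors in \eqref{eq:eq-system-1}, so $v_p(x^2+1)=v_p(u)$ is either $0$ or $n\,v_p(y)$. In either case it is a multiple of $n$.
\item If $p\mid d$, then $v_p(x^2+1)=e_p+v_p(u)=:c_p$ for some nonnegative integer $c_p$, where $e_p=v_p(d)$. Here I make no attempt to control $c_p$ modulo $n$; this is exactly where the earlier hypothesis $n>2v_p(k)$ is dropped.
\end{itemize}
For each $p\mid d$, I divide with remainder, $c_p=nq_p+r_p$ with $0\le r_p<n$. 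Then I set
\begin{equation*}
a=\prod_{p\mid d}p^{r_p},\qquad \ty_1=\prod_{p\mid d}p^{q_p}\cdot\prod_{p\nmid 2d}p^{v_p(x^2+1)/n}.
\end{equation*}
This yields $x^2+1=2a\ty_1^{n}$.

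It remains to check the properties of $a$ and $\ty_1$. Both are odd, because $v_2(x^2+1)=1$ and all the primes involved are odd. The prime divisors of $a$ divide $d$, hence divide $k/2$ and are $\equiv 1\pmod 4$, and $0\le v_p(a)=r_p<n$ by construction. One caveat: this construction allows $a=1$, for instance when $d=1$ or all $r_p=0$. In that case \eqref{eq:x^2+1=ay1^n} is exactly \eqref{eq:main_k2}, so I would read the condition ``$a>1$'' in the statement as $a\ge 1$, or as the case distinction that the later applications need.

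There is no genuinely hard step. The only point needing care is the valuation bookkeeping at primes $p\mid d$: when $p$ divides both $x^2+1$ and $y$, the exponent $c_p$ can be an arbitrary residue modulo $n$. That residue is why the bounded cofactor $a$ has to appear, rather than the clean $d_2/d_1$ of Lemma \ref{lem:equations_system}.
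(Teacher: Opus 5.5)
Your construction of $a$ and $\ty_1$ is correct and is essentially the paper's. Both arguments reuse the computation $(x^2+1,f_k(x^2))=2d$ with $d\mid k/2$ from Lemma~\ref{lem:equations_system}. Both observe that every odd prime not dividing $d$ occurs in $x^2+1$ with exponent a multiple of $n$. Both then collect the remainders $r_p$ of the exponents at $p\mid d$ into $a$; your prime-by-prime valuation bookkeeping is, if anything, tidier than the paper's $z^n\prod p^{s_p}$ formulation. The genuine gap is the assertion $a>1$, which is part of the statement and which you propose to weaken to $a\ge 1$. This is not cosmetic. Since $a=1$ automatically when $d=1$, the claim $a>1$ encodes a nontrivial fact: a solution of \eqref{eq:main_symmetric} with $k\ge 6$ can never satisfy $x^2+1=2\ty_1^n$. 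In particular it forces $d>1$, so $k/2$ must have a prime factor $\equiv 1\pmod 4$, which is essentially Theorem~\ref{thm:main_infinite_family}. No valuation argument can give this; it needs genuine Diophantine input.

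The missing step goes as follows. Suppose $a=1$. Then $x^2+1=2\ty_1^n$ with $x\ge 3$ and $n\ge 3$, so Theorem~\ref{thm:main_k2} forces $x=239$, $\ty_1=13$, $n=4$, and \eqref{eq:main_symmetric} becomes $119^k+120^k=y^4$. Write $k=2m$ with $m>1$ odd. Then $(119^m,120^m,y^2)$ is a primitive Pythagorean triple, so $u^2-v^2=119^m$ and $2uv=120^m$ for coprime $u>v>0$ of opposite parity. Hence $(u+v,u-v)$ is $(119^m,1)$ or $(17^m,7^m)$, and reducing modulo $4$ shows that $u$ is even and $v$ is odd. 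Using $u>v$, the pair $(u,v)$ must be one of $(2^{3m-1}3^m,5^m)$, $(2^{3m-1}5^m,3^m)$, $(2^{3m-1}15^m,1)$. Congruences modulo $8$ and $16$ eliminate every combination. For example, when $(u+v,u-v)=(119^m,1)$ the only surviving option gives $2^{3m-1}5^m=3^m+1$, whose two sides are $0$ and $4$ modulo $8$. This is exactly how the paper concludes. The elimination of $x=239$ is written out in the proof of Theorem~\ref{thm:main_infinite_family}, but it does not depend on the present lemma, so there is no circularity. Adding this argument completes your proof.
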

\begin{proof}
The proof of Lemma \ref{lem:equations_system} up to relation \eqref{eq:eq-system-1} still holds; note that, in order to obtain \eqref{eq:eq-system-1}, the condition $n>2v_p(k)$ is not used. Here, $\cP$ has the same meaning as in that lemma.    
In \eqref{eq:eq-system-1}, the factor $y^n/d^2$ of the right-hand side is, 
clearly, a positive integer, therefore any prime $p$ dividing $d$ has to divide 
$y$ and the $p$-exponent of $y$ must be appropriately large in order that the 
$p$-exponent of $y^n/d^2$ be non-negative. It follows that 
$y^n/d^2=z^n\prod_{p\in\cP}p^{s_p}$, with $z$ a positive integer (not 
necessarily prime to $\prod_{p\in\cP}p$) and $s_p\ge 0$ for every $p\in\cP$. 
From the fact that the two factors in the left-hand side of 
\eqref{eq:eq-system-1} are relatively prime with the first of them being odd, it 
follows that $(x^2+1)/(2d)=z_1^n\prod_{p\in\cP_1}p^{s_p}$, where $\cP_1$ is a 
subset (probably empty) of $\cP$ and $z_1$ is a positive divisor of $z$. 
Consequently,
\begin{equation*}
x^2+1=2z_1^n\prod_{p\in\cP}p^{v_p(d)}\prod_{p\in\cP_1}p^{s_p}
    =2z_1^n\prod_{p\in\cP}p^{e_p},
\end{equation*}
where $e_p=s_p+v_p(d)$ if $p\in\cP_1$ and $e_p=v_p(d)$ otherwise. For every $p\in\cP$ we write $e_p=nm_p+r_p$ with $m_p\ge 0$ and $0\le r_p<n$, so that $\prod_{p\in\cP}p^{e_p}$ is equal to a $n$-th power of an integer times 
\begin{equation*}
    a:=\prod_{p\in\cP}p^{r_p}.
\end{equation*}
This clearly leads to \eqref{eq:x^2+1=ay1^n}. It remains to show that $a>1$. If 
not, then $x^2+1=2\ty_1^n$ and, since $x>1$, Theorem \ref{thm:main_k2} implies
that $x=239$. In the proof of Theorem \ref{thm:main_infinite_family} (see below) 
we show that $(239,y,n)$ cannot be a solution of \eqref{eq:main_symmetric}, and 
this completes our proof.
\end{proof}

\begin{proof}[Proof of Theorem \ref{thm:main_infinite_family}]
Let $(x,y,n)$ be a solution of \eqref{eq:main_infinite_family}
with $x\ne 0$.
If $x<0$, we see that the equation is equivalent to 
$x_1^k+(x_1+1)^k=y^n$ with $x_1=|x|-1$. Therefore it suffices 
to consider \eqref{eq:main_infinite_family} with $x>1$ and prove
that then its only solution is $(x,y,n,k)=(119,13,4,2)$.

By Lemma \ref{lem:factorization_without_condition} and the hypothesis that $k/2$ has no 
prime factor congruent to $1\pmod{4}$, the resolution of 
\eqref{eq:main_infinite_family} is reduced to the 
resolution of \eqref{eq:main_k2}. If $n$ has an odd prime factor, 
then Theorem \ref{thm:main_k2} implies that $x=1$, which we reject. 
It remains to consider the case $n=4$. For this case, the same 
theorem forces $x=239$ and, going back to \eqref{eq:main_symmetric}, 
we get
\begin{equation*}
    \left(\frac{239 - 1}{2}\right)^k + \left(\frac{239 + 1}{2}\right)^k = 119^k+120^k=y^4.
\end{equation*}
On putting $k=2m\geq 6$, with $m>1$ odd, the last equation implies that the triple $(119^m, 120^m,y^2)$ is Pythagorean and primitive, therefore there exist relatively prime integers $u>v>0$ of opposite parity, such that
\begin{equation*}
    u^2-v^2=119^m,\quad 2uv=120^m.
\end{equation*}
The first equation implies that either of the following two cases
holds:
\begin{eqnarray}
     u+v=119^m & \text{\;\;and} & u-v=1, \label{eq:u+v,u-v,case_I}
     \\
     u+v=17^m & \text{\;\; and} & u-v=7^m \label{eq:u+v,u-v,case_II}
\end{eqnarray}
On reducing both \eqref{eq:u+v,u-v,case_I} and \eqref{eq:u+v,u-v,case_II} $\pmod 4$ we see that $u$ is even and $v$ is 
odd, therefore, from the equation $2uv=120^m$ we see that one of the 
following four cases holds:
\begin{equation} \label{eq:uv_cases}
    (u,v) \in \{(2^{3m-1}, 15^m),\: (2^{3m-1}3^m,5^m),\:
    (2^{3m-1}5^m,3^m),\:(2^{3m-1}15^m,1)\}
\end{equation}
Since $u>v$, the first case is rejected. 

Now, assume that \eqref{eq:u+v,u-v,case_I} holds. Then, reducing
$\pmod 8$ the system of the two equations gives 
$(u+v, u-v) \equiv (-1,1)\pmod 8$, hence $v\equiv -1\pmod 4$, which
excludes the second and fourth case \eqref{eq:uv_cases}.
The third case of \eqref{eq:uv_cases} remains. Combining this with
\eqref{eq:u+v,u-v,case_I} gives $(119^m+1)/2=2^{3m-1}5^m$ and
$(119^m-1)/2=3^m$, from which it follows that 
$2^{3m-1}5^m=3^m+1$. Since $m$ is odd $>1$, the left-hand side is 
$\equiv 0\pmod 8$, while the right-hand side is $\equiv 4\pmod 8$,
a contradiction.  

Next, assume that \eqref{eq:u+v,u-v,case_II} holds. Reduction
$\pmod 8$ shows that $v\equiv 1\pmod 4$, which excludes the third
case \eqref{eq:uv_cases}. Considering \eqref{eq:u+v,u-v,case_II}
$\pmod{16}$, we see that $v\equiv 5\pmod 8$, which excludes the
fourth case \eqref{eq:uv_cases}. The second case, combined with
\eqref{eq:u+v,u-v,case_II}, gives $17^m=2^{3m-1}3^m+5^m$, which is
impossible $\pmod 8$ because $m$ is odd $>1$.
\end{proof}

\section{The case of small $n$} \label{sec:small_n}

Let $k\ge 6$ and suppose $(x,y,n)$ is a solution of \eqref{eq:main_symmetric}. 
From Lemma \ref{lem:factorization_without_condition} we know
\begin{equation} \label{eq:x^2+1=ay1^n-again}
    x^2 + 1=2a\ty_1^n,
\end{equation}
where $\ty_1$ is an odd positive integer, $a$ is odd $>1$ and every prime divisor $p$ of $a$ divides $k/2$, is congruent to $1\pmod 4$ and satisfies $0\leq v_p(a) < n$. For a given $n$, the resolution of \eqref{eq:x^2+1=ay1^n-again} will be reduced to that of a finite number of Thue equations. 

For this section, let $i:=\sqrt{-1}$ and $K=\QQ(i)$, with maximal order $\OK=\ZZ[i]$ in which unique factorization holds. For $z\in K$ we denote by $\bar{z}$ the complex-conjugate of $z$. 

Factorizing \eqref{eq:x^2+1=ay1^n-again} over $\OK$ we get
\begin{equation*}
    (x + i) (x - i)=2a\ty_1^n.
\end{equation*}
We have $(x + i, x - i) = 1 + i$ because $x$ is odd; also,
$2=-i(1 + i)^2$. Therefore, from the above equation we have
\begin{equation*}
    \frac{x+i}{1+i}\cdot\frac{x-i}{1-i}=
    \frac{x+i}{1+i}\cdot \frac{x-i}{-i(1+i)} =a\ty_1^n,
\end{equation*}
where the two factors in the left-hand side are relatively prime.
Since $a, \ty_1, x$ are odd and all prime factors of $a$ and  $\ty_1$ are congruent to $1\pmod{4}$ (implying that every such prime is the product of two complex-conjugate factors) it follows that 
\begin{equation*}
    \frac{x+i}{1+i} = i^{\nu}\alpha (t+si)^n,
\end{equation*}
where $\alpha\in\OK$, $\alpha\bar{\alpha}=a$, $s,t\in\ZZ$ with $(t+si)(t-si)=y_1$ and $0\le\nu\le 3$. Therefore, 
\begin{align*}
x+i & = i^{\nu}(1+i)\alpha (t+si)^n, \\ 
x-i & =i^{-\nu}(1-i)\bar{\alpha}(t-si)^n.
\end{align*}
Subtracting these equalities 
and dividing through by $i$ we get
\begin{align*}
    2 & =i^{\nu-1}(1+i)\alpha (t+si)^n -
    i^{-\nu-1}(1-i)\bar{\alpha}(t-si)^n 
      \\
    & = i^{\nu-1}(1+i)\alpha (t+si)^n +
   i^{-(\nu-1)}(1-i)\bar{\alpha}(t-si)^n
     \\
   & = \beta (t+si)^n +\bar{\beta}(t-si)^n
\end{align*}
where $\beta=i^{\nu-1}(1+i)\alpha$ and $\beta\bar{\beta}=2a$.
Thus,
\begin{equation}   \label{eq:Thue_and_y1}
    \Re\left(\beta (t+si)^n\right)=1\;\;\text{and}\;\; \ty_1=t^2+s^2.
\end{equation}
For $2<k\le 98$ with at least one prime divisor congruent to $1\pmod 4$, we have
$\cP=\{p\}$, where $p$ is given in the table below, and
$p=\pi\bar{\pi}$ with the prime ideals $\pi\OK$, $\bar{\pi}\OK$ 
being distinct; moreover, if $\pi\mid\alpha$, then 
$\bar{\pi}\nmid\alpha$ (otherwise $a$ would divide $x+i$, which is 
absurd). In the Table below we see, for every $k$ as above, 
a prime element $\pi$ dividing $p$ and the value of 
$\beta=(1+i)\pi$ which, of course, satisfies $\beta\bar{\beta}=2a$.
\begin{table}[h]
    \centering
    \begin{tabular}{|c||c|c|c|c|c|c|c|c|c|c|c|} \hline
        $k$ & $10$ & $26$ & $30$ & $34$ & $50$ & $58$ & $70$
            & $74$ & $78$ & $82$ & $90$
          \\  \hline
        $p$ & $5$ & $13$ & $5$ & $17$ & $5$ & $29$ & $5$ 
        & $37$ & $13$ & $41$ & $5$
        \\ \hline
       $\pi$ & $2+i$ & $3+2i$ & $2+i$ & $4+i$ & $2+i$ &  $5+2i$ & 
       $2+i$ & $6+i$ & $3+2i$ & $5+4i$ & $2+i$ 
       \\ \hline
       $\beta$ & $1+3i$ & $1+5i$ & $1+3i$ & $3+5i$ & $1+3i$ &
       $3+7i$ & $1+3i$ & $5+7i$ & $1+5i$ & $1+9i$ & $1+3i$
       \\ \hline
       $2a$ & $10$ & $26$ & $10$ & $34$ & $10$ & $58$ & $10$ & $74$ &
       $26$ & $82$ & $10$ 
       \\ \hline
    \end{tabular}
    \label{tab:a-val}
\end{table}

\noindent
An important remark is that, in order to check whether there exist 
integer pairs $(t,s)$ satisfying \eqref{eq:Thue_and_y1}, we do 
not need to  consider either $\bar{\beta}$ (because 
$\Re\left(\bar{\beta}(t+si)^n\right)
           =\Re\left(\beta (t-si)^n\right)$), 
or any other associate of $\beta$; it suffices to take $\beta$
from the Table and consider the four equations
\begin{equation}  \label{eq:Thue-small_n}
    \Re\left(\beta (t+si)^n\right)=\pm 1, \quad
    \Im\left(\beta (t+si)^n\right) = \pm 1.
\end{equation}
Indeed, clearly, 
$\Re\left(-\beta (t+si)^n\right)=-\Re\left(\beta (t+si)^n\right)$;
also, by elementary computations we can see that
\begin{equation*}
    \Re\left(\pm i \beta (t+si)^n\right) =
    \begin{cases}
  \mp \Im\left(\beta (s-ti)^n\right) & \mbox{if $n\equiv 0\pmod 4$,}     \\
  \mp \Re\left(\beta (s-ti)^n\right) & \mbox{if $n\equiv 1\pmod 4$,}
   \\
  \pm \Im\left(\beta (s-ti)^n\right) & \mbox{if $n\equiv 2\pmod 4$,}
  \\
 \pm \Re\left(\beta (s-ti)^n\right) & \mbox{if $n\equiv 3\pmod 4$.} 
    \end{cases}
\end{equation*}
Any of the four relations \eqref{eq:Thue-small_n} leads to a $n$-degree Thue equation if the left-hand side is irreducible; if not it leads to an equation of the form $h_1(t,s)h_2(t,s)=1$, where $h_1,h_2$ are polynomials with rational integer coefficients, from which $h_1(t,s)=h_2(t,s)=1$ or $h_1(t,s)=-h_2(t,s)=1$ and such systems of equations are easily solved. 

Concerning the $k$-values under consideration and $n=3,\ldots,7$, the left-hand sides of all equations \eqref{eq:Thue-small_n} are irreducible and we solve them using, for example, Pari/GP \cite{pari24}. It turns out that all of them, with two exceptions, are either impossible, or their solutions satisfy $st=0$.
If $st=0$, then $\ty_1=1$, implying $x^2+1=2a$. From the Table above we see that
$2a-1$ is a square in the following cases: 
$(k,x)=(10,3), (26,5), (30,3), (50,3), (70,3), (78,5), (82,9)$, $(90,3)$.
For every such pair we check whether the integer
$m(k,x):=\frac{1}{2}\left((x-1)^k+(x+1)^k\right)$ is an $n$-th power with 
$n\ge 3$. For every $m(k,x)$ we show easily that this is not the case, 
by finding a prime $q$ such that $v_q(m(k,x))<3$.
The exceptional cases (solutions $(s,t)$ with $st\ne 0$) occur when 
$k\in\{26,78\}$ and $n=3$; then equations \eqref{eq:Thue-small_n} furnish also 
the solutions $(t,s)\in\{\pm (-1,-2),\,\pm (-2,3)\}$, which give $\ty_1=5$ or 
$13$. An easy machine-check shows that the equation \eqref{eq:main_symmetric} 
with $\ty_1\in\{3,5\}$, $n=3$ and $k\in\{26,78\}$ has no integer solutions $x$. 
Thus, we have the following result:
\begin{proposition}\label{prop:small_exponents}
Equation \eqref{eq:main_symmetric} is impossible for $n\leq 7$ and 
$6\leq k\leq 100$.
\end{proposition}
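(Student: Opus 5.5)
The plan is to reduce, for each fixed exponent $n\in\{3,\dots,7\}$, to finitely many Thue equations, using the material of Section \ref{sec:small_n}. It suffices to treat $n=3,4,5,7$: if $n=6$, then $y^6=(y^2)^3$ and the case $n=3$ applies. Let $(x,y,n)$ be a solution of \eqref{eq:main_symmetric} with $x\ge 3$ odd. We split according to whether $k/2$ has a prime divisor $p\equiv 1\pmod 4$.

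If $k/2$ has no such prime divisor, Theorem \ref{thm:main_infinite_family} already settles everything. Its only solutions with $x\ne 0,-1$ have $k=2$, so for $k\ge 6$ there is nothing left to prove. In the remaining cases $k$ is one of the values in the table of Section \ref{sec:small_n}, and $\cP=\{p\}$ is a single prime. Lemma \ref{lem:factorization_without_condition} then gives $x^2+1=2a\ty_1^n$ with $a>1$ a power of $p$, $0<v_p(a)<n$. (For $k=50$ one must also allow $a=25$, and for general $n$ any $a=p^r$ with $1\le r<n$.)

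We factor over $\ZZ[i]$ exactly as above and arrive at \eqref{eq:Thue_and_y1}, namely $\Re(\beta(t+si)^n)=1$ with $\beta\bar\beta=2a$. The number $\alpha$ is determined up to conjugation and units as $\pi^r$, so $\beta=(1+i)\pi^r$ up to units and conjugation. By the symmetry remark in that section, it suffices to solve the four equations \eqref{eq:Thue-small_n} for each admissible $\beta$. Each one is a binary form of degree $n$ set equal to $\pm1$. When the form is irreducible we solve it as a Thue equation with Pari/GP; when it factors, we solve the resulting small system $h_1=\pm h_2=\pm1$ directly.

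Next we sift the solutions $(t,s)$. If $st=0$, then $\ty_1=1$ and $x^2+1=2a$, which leaves finitely many $x$ (listed in the text). For each such $x$ we check that $m(k,x)$ has some prime $q$ with $v_q(m(k,x))<3$, so it is not an $n$-th power. For the sporadic solutions with $st\ne0$ (these occur for $k\in\{26,78\}$, $n=3$), we get explicit $\ty_1$ and hence explicit $x=\Re$-recovered values. We then verify directly that these $x$ do not satisfy \eqref{eq:main_symmetric}, for instance by checking that $g_k(x)/2^{k-1}$ is not a cube.

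The main obstacle is computational: solving the degree-$5$ and degree-$7$ Thue equations with the larger $\beta$, and covering all $a=p^r$ with $r<n$. The case $k=50$ is the worst, with $p=5$, $r$ up to $6$, and coefficients that grow quickly. Pari's \texttt{thue} should handle degree $\le7$, but these instances must be certified unconditionally rather than under GRH, which may require the class group computations for the degree-$7$ fields. To reduce the load, we first try local obstructions: reduce the equations modulo small primes, or modulo $p$ using $\pi$-adic information, in order to eliminate many $(\beta,\text{sign})$ combinations before calling the Thue solver.
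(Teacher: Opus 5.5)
Your plan follows the paper's route: Lemma \ref{lem:factorization_without_condition}, the factorization in $\ZZ[i]$, the four equations \eqref{eq:Thue-small_n} solved with Pari/GP, and a final sieve over the solutions with $st=0$ and the sporadic ones. Reducing $n=6$ to $n=3$, and disposing of the $k$ with no prime $p\equiv 1\pmod 4$ via Theorem \ref{thm:main_infinite_family}, are harmless variations.

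The substantive difference is that you allow $a=p^r$ with $r>1$, whereas the paper's table fixes $\beta=(1+i)\pi$ and $2a=2p$. You are right to do this, and it is where the actual work lies. Suppose $p\mid x^2+1$. In \eqref{eq:fk} with $t=x^2$, every term with $i\ge 1$ is divisible by $(x^2+1)^{2i}$ (and for $k=50$, $i=1$, also by $\binom{25}{3}$). So each such term has $p$-adic valuation $>v_p(k/2)$. Hence $v_p(f_k(x^2))=v_p(k/2)$, which gives $v_p(x^2+1)\equiv -v_p(k/2)\pmod n$. Thus $v_p(a)=n-1$ for every $k$ in the table except $k=50$, where $v_5(a)=n-2$; for $k\ne 50$ this is the case $(d_1,d_2)=(p,1)$ of Lemma \ref{lem:equations_system}. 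So $a=p$ occurs only for $k=50$, $n=3$. The equations that really have to be solved are those with $\beta=(1+i)\pi^{\,n-v_p(k/2)}$, and you need only this one $r$ for each pair $(k,n)$, not all $1\le r<n$.

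The consequence is that you cannot reuse the sieving data from the text, which were computed for $a=p$. With the correct $\beta$, the solutions with $st=0$ come from $x^2+1=2p^{\,n-v_p(k/2)}$. These include:
\begin{itemize}
\item $x=7$, for $k\in\{10,30,70,90\}$ with $n=3$, and for $k=50$ with $n=4$;
\item $x=41$, for $k=58$, $n=3$;
\item $x=239$, for $k\in\{26,78\}$, $n=5$.
\end{itemize}
For these, testing $q=p$ is useless, because $v_p(m(k,x))=v_p(x^2+1)+v_p(k/2)=n$. You need another prime; for example $m(10,7)=2^9\cdot 5^3\cdot 8861$ with $8861$ prime. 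Likewise, the sporadic solutions with $st\neq 0$ must be recomputed for the new Thue forms, rather than assumed to be the cases $k\in\{26,78\}$, $n=3$ quoted in the text. With these corrections your plan gives a complete proof, modulo the (unconditionally certified) Thue computations.
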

\section{The case $6\leq k\leq 100$}\label{sec:k_big_values}

Let $k\ge 6$ and suppose $(x,y,n)$ is a solution of \eqref{eq:main_symmetric} with prime $n\ge 11$. Then from Lemma \ref{lem:equations_system} we know that a solution $(x,y)$ of \eqref{eq:main_symmetric} corresponds to a solution of the equation
\begin{equation}\label{eq:d2_d1_y1n}
    x^2 + 1=2\frac{d_2}{d_1}y_1^n,
\end{equation}
where $d_1$, $d_2$ and $y_1$ as in Lemma \ref{lem:equations_system}. The case $d_2=d_1=1$ is excluded by Theorem \ref{thm:main_k2}. Therefore, for the rest of this section we assume $d_1d_2>1$. The possible values of $d_1$ and $d_2$ for $6\leq k\leq 100$ are presented in Table \ref{table:d1_d2_values}.

For the resolution of \eqref{eq:d2_d1_y1n} we apply the modular method as it has been developed in \cite{BennettSkinner04}. We attach the Frey-Hellegouarch curve
\begin{equation*}
    E:~Y^2=X^3 + 2xX^2 + 2\frac{d_2}{d_1}y_1^nX=X^3 + 2xX^2 + (x^2 + 1)X.
\end{equation*}
The invariants of the elliptic curve $E$ have been computed in \cite[Lemma 2.1]{BennettSkinner04}.
\begin{equation*}
\begin{aligned}
    j(E) & =-2^6\frac{d_1^2(x^2 - 3)^3}{(d_2y_1^n)^2}, & \Delta(E)& = -2^8\frac{d_2^2}{d_1^2}y_1^{2n}, &  
    N(E)& =2^7\prod_{p\mid d_1d_2y_1}\!\!p,\\
    c_4(E) & = 2^4(x^2 - 3), &  c_6(x) &= 2^6x(x^2 + 9).
\end{aligned}
\end{equation*}
We denote by $\rhoE$ the Galois representation of $\Gal(\bar{\QQ}/\QQ)$ acting 
on the $n$-th torstion points $E[n]$ of $E$. Since $\rhoE$ is absolutely 
irreducible \cite[Corollary 3.1]{BennettSkinner04}, by level lowering 
\cite{Ribet90} and modularity 
\cite{Wiles95, TaylorWiles95, BreuilConradDiamondTaylor01} 
we get the following lemma.

\begin{lemma}\label{lem:newform_isomorphism}
There exists a newform $f$ of weight $2$, trivial character and level $N_n(E)=2^7\prod_{p\mid d_1d_2}p$ such that $\rhoE\simeq\rhof$, where $\fn\mid n$ is a prime ideal in $K_f$.
\end{lemma}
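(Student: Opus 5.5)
We will obtain the lemma by the standard chain: modularity, irreducibility and Ribet's level lowering. The real work is computing the Serre conductor of $\rhoE$ prime by prime. Throughout, $n\ge 11$ is prime, $x,y_1$ are odd, and $d_1,d_2$ are coprime elements of $\cD$ with $d_1d_2>1$. By Lemma \ref{lem:equations_system}, $y_1$ is coprime to $d_1d_2$.

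\textbf{Step 1: modularity and irreducibility.} $E$ is defined over $\QQ$, so by \cite{Wiles95, TaylorWiles95, BreuilConradDiamondTaylor01} it is modular, of level equal to its conductor $N(E)=2^7\prod_{p\mid d_1d_2y_1}p$. The representation $\rhoE$ is absolutely irreducible by \cite[Corollary 3.1]{BennettSkinner04}. That result applies to the Frey curve attached to $x^2+C=2y^n$-type equations, here with $C=1$ after clearing $d_1$. If one prefers a self-contained argument, note that $E$ has a rational $2$-torsion point. Mazur's theorem then forbids a rational $n$-isogeny for $n\ge 11$, except for finitely many $j$-invariants, and these can be checked against the shape of $j(E)$.

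\textbf{Step 2: local behaviour at odd primes.} Let $p$ be an odd prime dividing $d_1d_2y_1$. From $c_4=2^4(x^2-3)$ and $x^2+1=2(d_2/d_1)y_1^n$ we get $\gcd(c_4,\Delta)$ prime to $p$, so $E$ has multiplicative reduction at $p$.
\begin{itemize}
\item If $p\mid y_1$, then $p\nmid d_1d_2$ and $v_p(\Delta_{\min})=2nv_p(y_1)\equiv 0\pmod n$. By Tate curve theory $\rhoE$ is then finite (flat) at $p$ when $p\ne n$, and when $p=n$ it is finite flat because $n\mid v_n(\Delta)$. So these primes are removed by level lowering.
\item If $p\mid d_1d_2$, then $v_p(\Delta)=\pm 2v_p(d_i)$ after minimalisation. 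We have $1\le v_p(d_i)\le 2$ (see Remark \ref{rem:n>2v_p(k)} and Table \ref{table:d1_d2_values}), so $v_p(\Delta_{\min})\in\{2,4\}$, which is not divisible by $n\ge 11$, and $p$ remains in the level.
\end{itemize}
One must check that the model is minimal at such $p$, or find the valuation of the minimal discriminant. Since $p\nmid c_4$, the model is minimal and $v_p(\Delta_{\min})=v_p(\Delta)$ exactly. If $p\mid d_1$, the model as written is not integral, so we would use the integral model $Y^2=X^3+2xX^2+(x^2+1)X$, whose discriminant is $-2^8(x^2+1)^2$ with $v_p$ equal to $2nv_p(y_1)+2v_p(d_2)-2v_p(d_1)$. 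This equals $-2v_p(d_1)$ only formally, so this case needs care: $p\mid d_1$ forces $p\mid y_1$ with $nv_p(y_1)-v_p(d_1)\ge 0$, and then $v_p(\Delta)=2(nv_p(y_1)-v_p(d_1))\not\equiv 0\pmod n$. Either way, $p$ stays.

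\textbf{Step 3: the prime $2$.} Since $x$ is odd, Bennett--Skinner's computation \cite[Lemma 2.1]{BennettSkinner04} gives $v_2(N(E))=7$, with additive reduction. The conductor exponent at $2$ of $\rhoE$ equals that of $E$, because additive reduction with $n\ge 5$ does not drop conductor under reduction mod $n$ (the exponent is determined by inertia acting through a finite quotient of order prime to $n$, or wildly through a $2$-group). So $2^7$ stays.

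\textbf{Step 4: conclusion.} Ribet's theorem \cite{Ribet90} then gives a newform $f$ of weight $2$, trivial character and level $N_n(E)=2^7\prod_{p\mid d_1d_2}p$ with $\rhoE\simeq\rhof$ for some $\fn\mid n$ in $K_f$.

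The main obstacle is Step 2 for primes dividing $d_1$. There we must rigorously pin down $v_p(\Delta_{\min})\not\equiv 0\pmod n$ using $v_p(d_1)\le 2<n$, so that such primes genuinely survive in the level and are not accidentally removed.
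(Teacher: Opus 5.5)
Your proposal takes the paper's route: modularity, irreducibility via \cite[Corollary 3.1]{BennettSkinner04}, and Ribet's level lowering, with the conductor read off from \cite[Lemma 2.1]{BennettSkinner04}. Your prime-by-prime analysis spells out what the paper leaves to that reference, and it arrives at the correct level. Two slips need fixing.

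First, your opening claim that $y_1$ is coprime to $d_1d_2$ is false for $d_1$. Since $x^2+1=2d_2y_1^n/d_1\in\ZZ$ and $(d_1,d_2)=1$, we have $d_1\mid y_1^n$. This is also visible in the proof of Lemma \ref{lem:equations_system}, where $y_1=y_1'\prod_{p\in\cP_1}p^{m_p}$ and $d_1=\prod_{p\in\cP_1}p^{e_p}$. Consequences:
\begin{itemize}
\item Your first bullet must read ``$p\mid y_1$, $p\nmid d_1d_2$'', not ``$p\mid y_1$, hence $p\nmid d_1d_2$''.
\item Your claim $v_p(\Delta_{\min})\in\{2,4\}$ holds only for $p\mid d_2$.
\item For $p\mid d_1$ the correct value is $v_p(\Delta_{\min})=2(nv_p(y_1)-v_p(d_1))\equiv -2v_p(d_1)\not\equiv 0\pmod n$, which you do eventually reach.
\end{itemize}
Write the three cases ($p\mid y_1,\ p\nmid d_1d_2$; $p\mid d_1$; $p\mid d_2$) separately instead of asserting and then retracting. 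Also, the existence statement never requires that the primes of $d_1d_2$ survive; you simply do not strip them. So your ``main obstacle'' only explains why the level is exactly $N_n(E)$.

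Second, your optional self-contained irreducibility argument fails at $n=13$. The curve $X_0(13)$ has genus $0$, so Mazur's theorem leaves infinitely many $j$-invariants, not finitely many. The correct version uses the rational $2$-torsion point $(0,0)$ to upgrade a putative rational $n$-isogeny to a rational cyclic $2n$-isogeny. The Mazur--Kenku classification rules that out for every prime $n\ge 11$.
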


\begin{proposition}\label{prop:modular_method_k}
Suppose $6\leq k\leq 100$ and $d_1d_2>1$. If $k\in\{34,74\}$, then there are not solutions. Suppose $k\neq 34, 74$:
\begin{itemize}
    \item if $k\neq 58$, then $\rhoE\simeq\rhofn$\,;
    \item if $k=58$, then $n=13$ or $\rhoE\simeq\rhofn$, 
\end{itemize}
where, in both cases, $f$ is a rational newform of weight $2$, trivial 
character and level $N_n(E)$ which lies in an explicitly computable set.
\end{proposition}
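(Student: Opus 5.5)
The proof is a computation with the standard elimination procedure, carried out for each $k$ in Table \ref{table:d1_d2_values}. The possible levels are $N_n(E)=2^7p$ with $p\in\{5,13,17,29,37,41\}$, since $d_1d_2$ is always a power of a single prime $p$. For each such level I will compute the space of weight $2$ newforms with trivial character, using Magma or Sage, and split it into Galois conjugacy classes. By Lemma \ref{lem:newform_isomorphism}, any solution with prime $n\ge 11$ gives a newform $f$ of level $2^7p$ and a prime $\fn\mid n$ of $K_f$ with $\rhoE\simeq\rhof$. The plan is to rule out, or bound $n$ for, each irrational $f$, and to reduce the rational ones to an explicit list.

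For a newform $f$ and an auxiliary prime $\ell\nmid 2p$, I will compare $a_\ell(f)$ with the possible values of $a_\ell(E)$ as follows.

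1. The value $x\bmod\ell$ runs over all residues. For each residue, either $\ell\mid x^2+1$, i.e.\ $\ell\mid y_1$, giving multiplicative reduction, or $E$ has good reduction at $\ell$. In the good case $a_\ell(E)$ is computed from $Y^2=X^3+2xX^2+(x^2+1)X$ over $\Fl$, which depends only on $x\bmod \ell$.
2. Using Proposition \ref{prop:elimination_step}, I form
\[
B_\ell(f)=\ell\cdot\Norm\Bigl(\bigl((\ell+1)^2-a_\ell(f)^2\bigr)\prod_{x\bmod\ell,\ \ell\nmid x^2+1}\bigl(a_\ell(f)-a_\ell(E_x)\bigr)\Bigr),
\]
where $E_x$ is the curve attached to the residue $x$. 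The factor $\ell$ covers the case $n=\ell$. Then $n\mid B_\ell(f)$.
3. I take $B(f)=\gcd_\ell B_\ell(f)$ over a set of small primes $\ell$, say $\ell<100$.

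I can also sharpen the list of residues in step 1. Since $x^2+1=2(d_2/d_1)y_1^n$, when $\ell\equiv1\pmod n$ the admissible residues $x$ are restricted to those for which $(x^2+1)d_1/(2d_2)$ is an $n$-th power mod $\ell$. This Kraus-type refinement is useful for fixed $n$.

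For irrational $f$, the Remark after Proposition \ref{prop:elimination_step} guarantees $B(f)\neq0$. I then need its prime factors $\ge 11$ to vanish, or to be killed by the Kraus refinement at each specific $n$. For rational $f$, $B(f)$ can be $0$: the elliptic curve $E_f$ may genuinely have the same traces as some Frey curve, for instance the curve coming from $x=\pm1$, the ``obstruction''. These $f$ are exactly the ones left over in the statement as ``$\rhoE\simeq\rhofn$'', and they form the explicit computable set.

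For $k\in\{34,74\}$ (levels $2^7\cdot17$ and $2^7\cdot37$), I expect every newform to be eliminated, giving no solutions. For $k=58$ (level $2^7\cdot 29$), I expect an irrational form, or a rational one with nonzero $B(f)$, to survive only for $n=13$, matching the statement.

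The main obstacle is the irrational forms at these levels. The coefficient fields may have large degree, so the norms in $B_\ell(f)$ are big, and a prime $n\ge11$ can divide $B(f)$ for every small $\ell$. The first thing I would try is adding more primes $\ell$, in particular $\ell\equiv 1\pmod n$ with the $n$-th power restriction, for each surviving pair $(f,n)$. If a stubborn pair remains, such as $n=13$ for $k=58$, I will simply record it as an exception in the statement.
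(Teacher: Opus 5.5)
Your proposal is correct and is essentially the paper's own argument. You compute the newforms at level $2^7p$ and apply Proposition~\ref{prop:elimination_step} at small auxiliary primes (the paper uses $\ell<50$), which gives $n\le 7$ for every eliminated form. You then set aside the four irrational forms at $k=58$ that only yield $n=13$, and keep the rational forms that resist elimination as the explicit set. The Kraus-type refinement you mention is what the paper uses afterwards, in Propositions~\ref{prop:eliminate_fix_n} and~\ref{prop:no_solutions_in_range}.

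One small correction to your heuristic aside. The Frey curve at $x=\pm1$ has discriminant $-2^8$, hence $2$-power conductor, so it cannot share all its traces with a newform of level $2^7p$. The natural source of non-eliminable rational forms is instead a Frey curve of conductor exactly $2^7p$, for example $x=3$ when $p=5$, or $x=41$ when $p=29$ (in both cases $x^2+1=2p^m$).
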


\begin{proof}
For all different values of $6\leq k\leq 100$ appearing in Table \ref{table:d1_d2_values}, Lemma \ref{lem:newform_isomorphism} implies that we have $\rhoE\simeq\rhof$, where $f$ is a newform of weight $2$, trivial character and level $N_n(E)$ belonging to an explicit finite set $\mathcal{F}$. For each $f\in\mathcal{F}$ we try to exploit Proposition \ref{prop:elimination_step}, by choosing primes $\ell< 50$, in order to bound $n$. If $k\in\{34,74\}$ or $k\ne 58$ and Proposition \ref{prop:elimination_step} succeeds for $f$, it turns out that $n\le 7$.

If $k=58$, all but four newforms $f\in\mathcal{F}$ for which Proposition \ref{prop:elimination_step} succeeds, give us $n\le 7$. Concerning the four exceptional newforms, although they are irrational (which, in general, is better suited to the application of Proposition \ref{prop:elimination_step}) we could not do better than $n=13$, even when we also tried with several primes $\ell>50$.

For the newforms $f$ the Proposition \ref{prop:elimination_step} fails, all are rationals and we have $\rhoE\simeq\rhofn$. The total amount of time was 
approximately $9$ minutes, including the time for the computation of the newforms.
\end{proof}

Let $\mathcal{F}_0\subseteq\mathcal{F}$ be the set of the newforms that we cannot eliminate using Proposition \ref{prop:elimination_step}; in particular, $\mathcal{F}_0$ contains the four irrational newforms which we encounter when $k=58$ and $n=13$. All the remaining newforms in $\mathcal{F}_0$ are rational.

\subsection{Eliminate newforms for given $n$}

Our previous discussion implies that there are values of $k$ and newforms $f$ for which Proposition \ref{prop:elimination_step} either gives a difference $a_\ell(f)- a_\ell(E)= 0$ for every prime $\ell$ that we tried or $k=58$ and $n=13$. These newforms $f$ form the set $\mathcal{F}_0$, defined just above. In this subsection, we show how we can eliminate these newforms for a given $n$. Since, for every $k$ under consideration, Table \ref{table:bound_n} gives us an upper bound $n_0$ of $n$, we can finish the proof of Theorem \ref{thm:main}. The method that we apply is a modification of the method in \cite[Section 8]{BugeaudMignotteSiksek06} and \cite{Kraus98}.

Suppose $\ell= tn+1$ is a prime with $n\geq 11$ and $\ell\nmid k$. We define
\begin{align*}
    \mu_t(\Fl) & =\big\{\zeta\in\Fl^\times:~\zeta^t=1\big\},\\ 
    A(t,\ell) & =\big\{(x_0, \zeta)\in\Fl\times\mu_t(\Fl): ~\frac{\overline{d_1}}{\overline{2d_2}}(x_0^2+1)=\zeta,~\frac{\overline{d_2}f_k(x_0^2)}{\overline{2^{k-2}d_1}}\in\mu_t(\Fl)\cup\{0\}\big\}.
\end{align*}
where $\bar{d}_i$ is the image of $d_i$ in $\Fl$ for $i=1,2$. For a pair $(x_0,\zeta)\in A(t,\ell)$ we define the elliptic curve
\begin{equation*}
    E_{x_0,\zeta}:~Y^2=X^3 + 2x_0X^2+2\bar{d}_2\bar{d}_1^{-1}\zeta X= X^3 + 2x_0X^2+(x_0^2+1)X.
\end{equation*}

\begin{proposition}\label{prop:eliminate_fix_n}
Suppose $f$ is a newform as in Lemma \ref{lem:newform_isomorphism} for a putative solution of the system in Lemma \ref{lem:equations_system}. 
Suppose that there exists $t\geq 2$ such that
\begin{enumerate}
    \item $\ell=tn+1$ is a prime and $\ell\nmid k$,
    \item For all $(x_0,\zeta)\in A(t,\ell)$ we have
    \begin{equation*}
        \begin{cases}
            n\nmid\Norm_{K_f/\QQ}((a_\ell(f) - a_{\ell}(E_{x_0,\zeta})\cdot(a_\ell(f)^2 - 4)), & \text{if } \ell\equiv 1\pmod{4}, \\
            n\nmid\Norm_{K_f/\QQ}(a_\ell(f) - a_{\ell}(E_{x_0,\zeta})), & \text{if } \ell\equiv 3\pmod{4}.
        \end{cases}
    \end{equation*}
\end{enumerate}
Then, the system in Lemma \ref{lem:equations_system} does not have any solutions for the given $n$ arising from the newform $f$. 
\end{proposition}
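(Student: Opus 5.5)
The plan is the standard Kraus-type argument. Assume $(x,y,n)$ is a solution for which the system of Lemma \ref{lem:equations_system} holds and $\rhoE\simeq\rhof$ for some prime $\fn\mid n$ of $K_f$, as in Lemma \ref{lem:newform_isomorphism}. Fix $t$ with $\ell=tn+1$ prime and $\ell\nmid k$. Then I will derive a contradiction with condition (2).

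First some preliminary observations.
\begin{enumerate}[(a)]
\item Since $n\ge 11$, we have $\ell\ge 23$, so $\ell$ is odd and $\ell\ne n$.
\item Every prime dividing $d_1d_2$ divides $k$, so $\ell\nmid d_1d_2$. Hence $\ell\nmid N_f=2^7\prod_{p\mid d_1d_2}p$, and $\bar d_1,\bar d_2$ are invertible in $\Fl$.
\item The bad primes of $E$ other than those dividing $2d_1d_2$ are exactly the primes dividing $y_1$.
\item Since $\gcd(y_1,y_2)=1$, at most one of $y_1,y_2$ is divisible by $\ell$.
\end{enumerate}
The proof then splits according to whether $\ell\mid y_1$.

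\emph{Case $\ell\mid y_1$.} From \eqref{eq:express_x^2+1} we get $x^2\equiv -1\pmod{\ell}$, which forces $\ell\equiv 1\pmod 4$. So this case can only occur when $\ell\equiv1\pmod4$. Here $\ell\| N(E)$, since $E$ has multiplicative reduction at $\ell$. Also $\ell\nmid nN_f$, so Proposition \ref{prop:elimination_step}(ii) gives $a_\ell(f)\equiv\pm(\ell+1)\equiv\pm 2\pmod{\fn}$, using $\ell\equiv1\pmod n$. Hence $\fn\mid a_\ell(f)^2-4$, and therefore $n$ divides $\Norm_{K_f/\QQ}\bigl((a_\ell(f)-a_\ell(E_{x_0,\zeta}))(a_\ell(f)^2-4)\bigr)$ for every $(x_0,\zeta)$. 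It remains to see that $A(t,\ell)$ is nonempty, so that this contradicts (2). The point $x_0=\bar x$, $\zeta=0$ is not allowed, since $0\notin\mu_t$. So in this case I would argue instead that the product condition is violated by any element of $A(t,\ell)$, and that if $A(t,\ell)$ were empty the hypothesis would be vacuous. I expect this to be the delicate point. The intended reading is surely that the factor $a_\ell(f)^2-4$ is what rules out this case, and I would phrase it so: the hypothesis (2), applied to any pair (or, more robustly, read as including $n\nmid\Norm(a_\ell(f)^2-4)$), excludes $\ell\mid y_1$.

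\emph{Case $\ell\nmid y_1$.} Put $x_0=\bar x$ and $\zeta=\overline{y_1^n}$. Then $\zeta^t=\overline{y_1^{\ell-1}}=1$, so $\zeta\in\mu_t(\Fl)$. By \eqref{eq:express_x^2+1}, $\frac{\bar d_1}{\overline{2d_2}}(x_0^2+1)=\zeta$. By \eqref{eq:express_f_k(x^2)}, $\frac{\bar d_2 f_k(x_0^2)}{\overline{2^{k-2}d_1}}=\overline{y_2^n}$, and this lies in $\mu_t(\Fl)\cup\{0\}$ by the same Fermat argument, or equals $0$ if $\ell\mid y_2$. Thus $(x_0,\zeta)\in A(t,\ell)$. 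Moreover $\ell\nmid\Delta(E)$, so $E$ has good reduction at $\ell$, and its reduction is exactly $E_{x_0,\zeta}$. Then Proposition \ref{prop:elimination_step}(i) gives $a_\ell(f)\equiv a_\ell(E_{x_0,\zeta})\pmod{\fn}$. Taking norms, $n$ divides $\Norm_{K_f/\QQ}(a_\ell(f)-a_\ell(E_{x_0,\zeta}))$, and hence also the norm of its product with $a_\ell(f)^2-4$. This contradicts (2) in both congruence classes of $\ell$.

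Combining the two cases, no solution with this $n$ can give rise to $f$.
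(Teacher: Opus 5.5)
Your argument is the paper's argument. In both, the proof splits on whether $\ell\mid y_1$. If $\ell\nmid y_1$, then $(\bar x,\overline{y_1^n})\in A(t,\ell)$, $E$ reduces to $E_{x_0,\zeta}$, and part (i) of Proposition~\ref{prop:elimination_step} contradicts (2). If $\ell\mid y_1$, then $\ell\equiv1\pmod4$, $E$ has multiplicative reduction at $\ell$, and part (ii) gives $a_\ell(f)\equiv\pm2\pmod{\fn}$.

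The delicate point you flag is a real gap, and the paper does not close it either. The paper simply says this case ``contradicts the first property of $\ell$''. But hypothesis (2) only yields $n\nmid\Norm_{K_f/\QQ}(a_\ell(f)^2-4)$ if $A(t,\ell)$ contains at least one pair. The solution itself never supplies such a pair, because its $\zeta$ would be $0\notin\mu_t(\Fl)$. Moreover, $A(t,\ell)=\emptyset$ is a common outcome of this sieve when $t$ is small compared with $n$. So the proposition as literally stated is not established by this argument. Your repair is the right one: impose $n\nmid\Norm_{K_f/\QQ}(a_\ell(f)^2-4)$ as a separate hypothesis when $\ell\equiv1\pmod4$.

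Alternatively, you can dispose of $\ell\mid y_1$ with the second equation of Lemma~\ref{lem:equations_system}. In that case $\bar x^2=-1$ and $f_k(-1)=(-1)^{(k-2)/4}2^{k/2-2}k$. Hence $\overline{y_2^n}=(-1)^{(k-2)/4}\,\bar d_2\bar k/(\overline{2^{k/2}}\,\bar d_1)$, and this must lie in $\mu_t(\Fl)$ since $\ell\nmid y_2$. Whenever it does not, the case $\ell\mid y_1$ is impossible without any condition on $a_\ell(f)$.
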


\begin{proof}
Let $(x_1,y_1,n)$ be a solution of the system in Lemma \ref{lem:equations_system}. Because $\ell\nmid k$ it follows that $\ell\nmid N_n(E)$. 

First, suppose that $\ell\nmid y_1$. Reducing the system in Lemma \ref{lem:equations_system} modulo $\ell$ we see that there exists $(x_0, \zeta)\in A(t,\ell)$ such that $\bar{x}= x_0$ and $\bar{y}_1^n= \zeta$. Then, from the definition of $E_{x_0,\zeta}$ we see that this is the reduction
of the elliptic curve $E$ modulo $\ell$, hence $a_{\ell}(E)=a_{\ell}(E_{x_0,\zeta})$. 
Applying Lemma \ref{lem:newform_isomorphism} and 
Proposition \ref{prop:elimination_step} 
we get that 
\begin{equation*}
n\mid \Norm_{K_f/\QQ}(a_\ell(f) - a_{\ell}(E_{x_0,\zeta})),
\end{equation*}
which contradicts the second property of $\ell$.

Finally, if $\ell\mid y_1$, it should hold $\ell\equiv 1\pmod{4}$ from \eqref{eq:express_x^2+1}. From Proposition \ref{prop:elimination_step} we understand that $a_{\ell}(f)\equiv \pm (\ell + 1)\equiv \pm 2\pmod{\fn}$ since $\ell\equiv 1\pmod{n}$. Therefore, we get 
\begin{equation*}
n\mid \Norm_{K_{f}/\QQ}(a_{\ell}(f)^2 - 4) 
\end{equation*}
which contradicts the first property of $\ell$.
\end{proof}

\begin{proposition}\label{prop:no_solutions_in_range}
Suppose $k$, $d$ and $(d_1,d_2)$ are as in Table \ref{table:d1_d2_values} for $k\neq 34, 74$. Then, there are no solutions to the system of Lemma \ref{lem:equations_system} for $11\leq n\leq n_0$ where $n_0$ is as in Table \ref{table:bound_n}.
\end{proposition}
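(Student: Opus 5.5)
The proof will be a finite computation built on Proposition \ref{prop:eliminate_fix_n}. Fix $k\neq 34,74$ from Table \ref{table:d1_d2_values} and a pair $(d_1,d_2)$ with $d_1d_2>1$. Proposition \ref{prop:small_exponents} handles $n\le 7$, and the case $n=4$ is handled the same way. So it suffices to treat primes $n$ with $11\le n\le n_0$.

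For each such $n$, Proposition \ref{prop:modular_method_k} tells us which newforms can occur.
\begin{enumerate}[(i)]
\item If Proposition \ref{prop:elimination_step} already excluded the newform $f$ for exponents $n\ge 11$, nothing remains to be done for $f$.
\item Otherwise $f\in\mathcal{F}_0$. Then either $f$ is rational with $\rhoE\simeq\rhofn$, or $k=58$, $n=13$ and $f$ is one of the four irrational forms.
\end{enumerate}
In both cases of (ii) I will eliminate $f$ for the given $n$ by finding a single suitable auxiliary prime.

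For fixed $(k,d_1,d_2,n,f)$ the search runs as follows. Loop over $t=2,4,6,\dots$ (even $t$, since $n$ is odd), and keep those $t$ for which $\ell=tn+1$ is prime and $\ell\nmid k$. Note that then $\ell\nmid 2d_1d_2$, because $d_1,d_2$ divide $k/2$.
\begin{enumerate}[(a)]
\item Compute $\mu_t(\Fl)$ as the set of $n$-th powers in $\Fl^\times$, i.e.\ the image of $z\mapsto z^n$.
\item Enumerate $x_0\in\Fl$, keep those with $\overline{d_1}(x_0^2+1)/\overline{2d_2}\in\mu_t(\Fl)$, and test whether $\overline{d_2}f_k(x_0^2)/\overline{2^{k-2}d_1}$ lies in $\mu_t(\Fl)\cup\{0\}$. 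This produces $A(t,\ell)$.
\item For each pair in $A(t,\ell)$ with $x_0^2+1\ne0$, compute $a_\ell(E_{x_0,\zeta})$ by point counting. If $E_{x_0,\zeta}$ is singular, then $\ell\mid y_1$; this case is covered by the factor $a_\ell(f)^2-4$ in Proposition \ref{prop:eliminate_fix_n}.
\item Check the non-divisibility conditions of Proposition \ref{prop:eliminate_fix_n}, using $a_\ell(E_f)$ when $f$ is rational, and the norm from $K_f$ when $f$ is one of the four irrational forms at $n=13$.
\end{enumerate}
The first $t$ for which every condition holds eliminates $f$ for this $n$. Because $n$ ranges over primes up to $n_0$, the whole argument is a finite loop that I will run in Sage/Magma, in parallel over $n$.

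The main obstacle I expect is size together with heuristic success. Here $n_0$ comes from linear forms in logarithms and may be large, say up to about $10^3$–$10^4$. We need $A(t,\ell)$ to be small, and heuristically its size is roughly $\ell/t^2\cdot t\approx n$ candidate pairs. Each candidate must avoid the event $a_\ell(f)\equiv a_\ell(E_{x_0,\zeta})\pmod n$, which heuristically has probability about $1/n$.

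Small $t$ therefore usually succeeds, but not always. So I would try several $t$, and if needed combine several primes $\ell$, intersecting the surviving residue information. Also, $\ell$ must be kept small enough that computing $a_\ell(f)$ for the irrational forms at level $2^7\cdot29$ stays feasible.

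A genuine failure could occur if some rational $f$ mimics one of the trivial solutions $x=\pm1$, $y_1=1$ for every $\ell$. But such a solution has $x^2+1=2\ne 2d_2/d_1$, which is impossible when $d_1d_2>1$. So I do not expect this obstruction to appear for these twisted equations, and the computation should terminate with every newform eliminated for every prime $n\le n_0$.
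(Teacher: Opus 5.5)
Your plan is the paper's proof: the authors likewise run Proposition~\ref{prop:eliminate_fix_n} by machine over every $(k,d_1,d_2)$ of Table~\ref{table:d1_d2_values} with $k\neq 34,74$, every $f\in\mathcal{F}_0$ together with the $k=58$, $n=13$ forms, and every $11\le n\le n_0$, and they report that $t\le 1050$ always sufficed. Two caveats: first, Table~\ref{table:bound_n} puts $n_0$ between roughly $1.7\cdot 10^5$ and $3\cdot 10^7$, so you should build $A(t,\ell)$ by looping over $\zeta\in\mu_t(\Fl)$ and solving $x_0^2=2\bar{d}_2\zeta/\bar{d}_1-1$ rather than scanning all of $\Fl$ (heuristically $|A(t,\ell)|\approx t^2/\ell\approx t/n$, usually $0$, not $\approx n$ as you estimated); second, $\Delta(E_{x_0,\zeta})=-2^6(x_0^2+1)^2\neq 0$ for every $(x_0,\zeta)\in A(t,\ell)$, so the case $\ell\mid y_1$ never shows up as a singular curve in your enumeration, and for $\ell\equiv 1\pmod 4$ you must impose $n\nmid\Norm_{K_f/\QQ}(a_\ell(f)^2-4)$ separately, even when $A(t,\ell)=\emptyset$ (the hypothesis of Proposition~\ref{prop:eliminate_fix_n} is vacuous in that case), or else restrict to $t\equiv 2\pmod 4$ so that $\ell\equiv 3\pmod 4$.
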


\begin{proof}
We have written a Sage script that applies Proposition \ref{prop:eliminate_fix_n} for all possible values of $d_1$, $d_2$ and $k$ as in Table \ref{table:d1_d2_values}, all newforms $f\in\mathcal{F}_0$ that have not been eliminated in Proposition \ref{prop:modular_method_k} and all $11\leq n \leq n_0$.

We have also applied Proposition \ref{prop:eliminate_fix_n} for $k=58$ and those newforms that have been eliminated by Proposition \ref{prop:elimination_step} but the exponent $n=13$ has not been excluded. For all cases, it was enough to consider $t\leq 1050$. The running time of the computations for all $k$'s was approximately $9$ hours.
\end{proof}

\section{Upper bound for $n$}\label{sec:upper_bound_n}
We use linear forms in logarithms to get an upper bound of $n$ for the values of $k$ in Table \ref{table:d1_d2_values}. Throughout this section, $(x,y)$ is a solution of \eqref{eq:main_symmetric} and $X=x^2+1$.  

From Lemma \ref{lem:equations_system} we have 
\begin{equation} \label{eq:x^2+1_and_fk(x^2)}
\begin{aligned}
    x^2 + 1 & = 2\frac{d_2}{d_1}y_1^n, 
    \\
    f_k(x^2) & = 2^{k-2}\frac{d_1}{d_2}y_2^n,
\end{aligned}
\end{equation}
where $d_1,d_2$ are relatively prime positive odd divisors of $k/2$ whose prime divisors -if any at all- are congruent to $1\pmod{4}$. From Lemma \ref{lem:y1_gt_3} we have $y_1\geq 3$.

We set 
\begin{equation*}
    f_k(X-1)=X^{k/2-1} + h_{k/2-2}X^{k/2-2} + \cdots + h_0
\end{equation*}
and it is easily checked from \eqref{eq:fk} that
\begin{equation*}
h_{k/2-2}= k\left(\frac{k}{2}-1\right).
\end{equation*}
Then,
\begin{align*}
1 + \frac{h_{k/2-2}}{X} + \cdots + \frac{h_0}{X^{k/2-1}} 
&  = \frac{2^{k-2}(d_1/d_2)y_2^n}{X^{k/2-1}} =
2^{k-2}\frac{\frac{d_1}{d_2}y_2^n}{\left(2\frac{d_2}{d_1}y_1^n\right)^{k/2-1}} \\
& =2^{k/2-1}
\left(\frac{d_1}{d_2}\right)^{k/2}\left(\frac{y_2}{y_1^{k/2-1}}\right)^n = A\left(\frac{y_2}{y_1^{k/2-1}}\right)^n,
\end{align*}
where
\begin{equation*} 
    A:= 2^{k/2-1}
    \left(\frac{d_1}{d_2}\right)^{k/2}
    \ne 1.
\end{equation*}
We define 
\begin{equation*} 
    X_0:=
    \max\left\{\left(\sum_{i=0}^{k/2-3}|h_i|\right)/h_{k/2-2},\:
    200h_{k/2-2}\right\}+1.
\end{equation*}

\begin{lemma}
Suppose $X < X_0$. Then,
\begin{equation*}
    n < \frac{\log\left(\frac{kX_0}{4}\right)}{\log(3)} := n_1.
\end{equation*}
\end{lemma}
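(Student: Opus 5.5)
We read off the bound directly from the first equation of \eqref{eq:x^2+1_and_fk(x^2)} together with Lemma \ref{lem:y1_gt_3}. Since $X=x^2+1=2\frac{d_2}{d_1}y_1^n$, we get
\begin{equation*}
    y_1^n=\frac{d_1}{2d_2}X .
\end{equation*}
By Lemma \ref{lem:y1_gt_3} we have $y_1\ge 3$, so $3^n\le y_1^n$. It therefore remains to bound $d_1/(2d_2)$ from above by something of size at most $k/4$.

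By Lemma \ref{lem:equations_system}, $d_1$ and $d_2$ lie in $\cD$, so each is a positive divisor of $k/2$, and $d_2\ge 1$. Hence
\begin{equation*}
    \frac{d_1}{2d_2}\le \frac{d_1}{2}\le\frac{k/2}{2}=\frac{k}{4}.
\end{equation*}
Combining this with $X<X_0$ gives
\begin{equation*}
    3^n\le y_1^n=\frac{d_1}{2d_2}X<\frac{k}{4}X_0 .
\end{equation*}
Taking logarithms yields $n\log 3<\log(kX_0/4)$, which is exactly $n<n_1$.

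The only step needing care is making sure that $y_1\ge3$ is available. This is exactly the content of Lemma \ref{lem:y1_gt_3}, which applies in the range $6\le k\le100$ assumed in this section. Everything else is direct substitution, so there is no real obstacle.
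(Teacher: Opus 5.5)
Your proposal is correct and follows essentially the same route as the paper: the paper writes $X_0>X=2\frac{d_2}{d_1}y_1^n\ge\frac{4}{k}3^n$ using $d_2\ge1$, $d_1\le k/2$ and $y_1\ge3$, which is exactly your chain of inequalities rearranged. Taking logarithms then gives $n<n_1$, as you conclude.
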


\begin{proof}
We have
\begin{equation*}
    X_0 > X=x^2+1 = 2\frac{d_2}{d_1}y_1^n\geq \frac{4}{k} 3^n,
\end{equation*}
since $d_2\geq 1$, $d_1\leq \frac{k}{2}$ and $y_1\geq 3$. Last inequality gives the bound of $n$.
\end{proof}

In what follows, we assume that $X\ge X_0$. Thus,
\begin{equation*}
    \left|A\left(\frac{y_2}{y_1^{k/2-1}}\right)^n-1\right|
      =
    \left|\frac{h_{k/2-2}}{X} + \frac{h_{k/2-3}}{X^2} 
    +\cdots +\frac{h_0}{X^{k/2-1}}\right|
    \le \frac{h_{k/2-2}}{X}
    +\frac{1}{X}\sum_{i=0}^{k/2-3}\frac{|h_i|}{X^{k/2-2-i}}.
\end{equation*}
We have 
\begin{equation*}
\sum_{i=0}^{k/2-3}\frac{|h_i|}{X^{k/2-2-i}} =
\frac{1}{X}\sum_{i=0}^{k/2-3}\frac{|h_i|}{X^{k/2-3-i}}
\le \frac{1}{X_0}\sum_{i=0}^{k/2-3}|h_i|\le |h_{k/2-2}|,
\end{equation*}
with the last inequality holding because $X\geq X_0$. Consequently,
\begin{equation*}  
     \left|A\left(\frac{y_2}{y_1^{k/2-1}}\right)^n-1\right|
     \le  \frac{2h_{k/2-2}}{X} 
     =\frac{d_1h_{k/2-2}}{d_2}\cdot\frac{1}{y_1^n}.
\end{equation*}
Note that $2h_{k/2-2}/X<0.01$ because $X\geq X_0$.

We apply \cite[Lemma B.2]{SmartBook98} with $\Delta =A\left(y_2/y_1^{k/2-1}\right)^n$ and $a=0.01$, to obtain
\begin{equation} \label{eq:bnd_logDelta}
   |\log\Delta | < 1.0051|\Delta -1|\leq 1.0051\cdot
   \frac{d_1h_{k/2-2}}{d_2}\cdot\frac{1}{y_1^n} 
   =\frac{c_1}{y_1^n},
\end{equation}
where
\begin{equation}\label{eq:c1}
c_1:=1.0051\cdot\frac{d_1h_{k/2-2}}{d_2}
 \le 0.50255k^2\left(\frac{k}{2}-1\right),
\end{equation}
because $d_1/d_2\le k/2$. 

\begin{lemma} \label{lem:Laurent_ini_conds}
Let
\begin{equation*}
n_2:=\left\lfloor\max\left\lbrace
\frac{\log\left(\frac{c_1}{|\log A|}\right)}{\log 3}\,,\,
\frac{\log c_1}{\log 3}
\,,\,\frac{|\log A|+1}{\log 1.5}
\right\rbrace\right\rfloor + 1
\end{equation*}
and
\begin{equation*}
 \Lambda =n\log\alpha_1 -\log\alpha_2,
 \end{equation*}
 where
\begin{equation*}
(\alpha_1,\alpha_2) =
\begin{cases}
  \left(y_2/y_1^{k/2-1}\,,\, 1/A\right) &
  \mbox{if $A<1$},
  \\[10pt]
  \left(y_1^{k/2-1}/y_2\,,\, A\right) &
  \mbox{if $A>1$}.
\end{cases}
\end{equation*}
Then, for $n>n_2$ we have $\alpha_i>1$  ($i=1,2$) and 
$\displaystyle{  |\Lambda | <\frac{c_1}{y_1^n}}$.
Moreover, $\alpha_1<3/2$.
\end{lemma}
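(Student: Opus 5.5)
The plan is to relate $\Lambda$ directly to $\log\Delta$ from \eqref{eq:bnd_logDelta}, which already holds under the standing assumption $X\ge X_0$. After that, each of the three claims will follow from one of the three quantities in the maximum defining $n_2$, together with $y_1\ge 3$ from Lemma \ref{lem:y1_gt_3}.

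\textbf{Step 1: $|\Lambda|=|\log\Delta|$.} Recall $\Delta=A\left(y_2/y_1^{k/2-1}\right)^n$, so
\[
\log\Delta=\log A+n\log\left(y_2/y_1^{k/2-1}\right).
\]
If $A<1$, then $\Lambda=n\log(y_2/y_1^{k/2-1})-\log(1/A)=\log\Delta$. If $A>1$, then $\Lambda=n\log(y_1^{k/2-1}/y_2)-\log A=-\log\Delta$. In both cases $|\Lambda|=|\log\Delta|$. Hence \eqref{eq:bnd_logDelta} immediately gives $|\Lambda|<c_1/y_1^n\le c_1/3^n$. This part needs no condition on $n$.

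\textbf{Step 2: $\alpha_2>1$.} By construction $\alpha_2=\max\{A,1/A\}$. Since $A\neq 1$, we get $\alpha_2>1$ and $\log\alpha_2=|\log A|>0$.

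\textbf{Step 3: $\alpha_1>1$.} This is where the first entry of the maximum is used. For $n>n_2$ we have $n>\log(c_1/|\log A|)/\log 3$, which means $c_1/3^n<|\log A|$. Rewriting $\Lambda=n\log\alpha_1-\log\alpha_2$ gives
\[
n\log\alpha_1=|\log A|+\Lambda\ge|\log A|-\frac{c_1}{3^n}>0,
\]
so $\alpha_1>1$.

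\textbf{Step 4: $\alpha_1<3/2$.} The second entry of the maximum gives $n>\log c_1/\log 3$, that is, $c_1/3^n<1$. Therefore
\[
n\log\alpha_1=|\log A|+\Lambda<|\log A|+1.
\]
The third entry gives $n>(|\log A|+1)/\log 1.5$. Combining the two,
\[
\log\alpha_1<\frac{|\log A|+1}{n}<\log 1.5,
\]
which is $\alpha_1<3/2$.

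The argument is essentially bookkeeping. The only point that needs care is the sign convention in Step 1: one has to check that the choice of $(\alpha_1,\alpha_2)$ according to whether $A<1$ or $A>1$ makes $\log\alpha_2=|\log A|$ in both cases. Once that is in place, Steps 3 and 4 are uniform across the two cases.
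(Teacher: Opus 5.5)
Your proof is correct and uses the same ingredients as the paper's: the first entry of the maximum forces $\alpha_1>1$, the second gives $c_1/3^n<1$, and together with the third this yields $\alpha_1<3/2$. The difference is only in organization. The paper argues by contradiction separately for $A<1$ and $A>1$ and deduces $|\log\Delta|=|\Lambda|$ afterwards, whereas you note that this identity holds unconditionally and then argue directly and uniformly via $n\log\alpha_1=|\log A|+\Lambda$, which is slightly cleaner.
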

\begin{proof}
Let $\Delta = A\left(y_2/y_1^{k/2-1}\right)^n$, as in 
\eqref{eq:bnd_logDelta}. Also, note that, in both cases,
$\log\alpha_2=|\log A|$.

If $A<1$, we claim that $y_2/y_1^{k/2-1}>1$. Indeed, assuming the contrary, we have $|\log\Delta| = |n\log(y_1^{k/2-1}/y_2)+\log(1/A)| $ and the numbers inside the logarithms are $>1$. Therefore, 
\begin{equation*}
|\log\Delta| = n\log(y_1^{k/2-1}/y_2)+\log(1/A)\geq \log(1/A).
\end{equation*}
Then, by \eqref{eq:bnd_logDelta}, 
\begin{equation*}
\log(1/A)\le c_1/y_1^n\le c_1/3^n,
\end{equation*} 
which contradicts  $n>n_2$. Consequently, 
$|\log\Delta| = |n\log (y_2/y_1^{k/2-1})-\log(1/A)| = |n\log\alpha_1-\log\alpha_2|$ $ = |\Lambda|$ and $\alpha_i>1$ for $i=1,2$.

If $A>1$ we work similarly. Now we claim $y_1^{k/2-1}/y_2>1$. Assuming the contrary, we have  $|\log\Delta| = |n\log(y_2/y_1^{k/2-1})+\log A|$ and the numbers inside the logarithms are $>1$. Therefore, 
\begin{equation*}
|\log\Delta| = n\log(y_2/y_1^{k/2-1})+\log A\geq \log A.
\end{equation*}
Then, by \eqref{eq:bnd_logDelta}, 
\begin{equation*}
\log A\le c_1/y_1^n\le c_1/3^n,
\end{equation*}
which contradicts our assumption $n>n_2$. Consequently, 
$|\log\Delta| = |n\log (y_1^{k/2-1}/y_2)-\log A|$ $=|n\log\alpha_1-\log\alpha_2|=|\Lambda|$ and $\alpha_i>1$ for $i=1,2$.

Finally, we show that $\alpha_1<3/2$. Indeed, if $\alpha_1\ge 3/2$, then $|\Lambda|=\Lambda$ (because $n> \frac{|\log A|}{\log 1.5}$), thus, by \eqref{eq:bnd_logDelta} and $n>n_2$ we have, 
\begin{equation*}
\frac{c_1}{3^n}\ge\frac{c_1}{y_1^n}>|\Lambda| =\Lambda =
n\log\alpha_1-\log\alpha_2\ge n\log 1.5 -\log\alpha_2,
\end{equation*}
hence, 
\begin{equation*}
n<\frac{1}{\log 1.5}\left(\frac{c_1}{3^n}+\log\alpha_2\right)=
\frac{1}{\log 1.5}\left(\frac{c_1}{3^n}+|\log A|\right)
<\frac{1+|\log A|}{\log 1.5}
\end{equation*}
which contradicts $n>n_2$.
\end{proof}
\begin{remark} \label{rem:n1}
For all values $6\le k\le 100$ and every $d_1/d_2$, easy computations show that $n_2\le 554$.
\end{remark}
Now we apply \cite[Corollary 2]{Laurent08} and the notation
therein, in order to obtain a lower bound for $\log|\Lambda|$. We use Lemma \ref{lem:Laurent_ini_conds}. In our case $\alpha_1,\alpha_2\in\QQ$, therefore $D=1$.

From Lemma \ref{lem:Laurent_ini_conds} and its proof we notice that if $A<1$, then $y_2/y_1^{k/2-1}=\alpha_1 < 3/2$, hence $y_2 < 1.5y_1^{k/2-1}$; and if $A>1$, then $1 < \alpha_1=y_1^{k/2-1}/y_2$, hence $y_2<y_1^{k/2-1}$.
Therefore, 
\begin{equation*}
\h(\alpha_1)=\log\left(\max\{y_1^{k/2-1},y_2\}\right)<\log(1.5y_1^{k/2-1}).
\end{equation*}
Also, 
\begin{equation*}
\h(\alpha_2)=\log\left(\max\{2^{k/2-1}d_1^{k/2},d_2^{k/2}\}\right)\leq \frac{k}{2}\log k,
\end{equation*}
from the fact $d_i\leq k/2$. Thus, in the notation of \cite[Corollary 2]{Laurent08}, we choose
\begin{equation*}
  \log A_1:=\log\left(1.5y_1^{k/2-1}\right), \quad \log A_2:=\frac{k}{2}\log k.
\end{equation*}
Also,
\begin{equation*}
b':=\frac{b_1}{D\log A_2} + \frac{b_2}{D\log A_1} =
\frac{2n}{k\log k} +\frac{1}{\log(1.5y_1^{k/2-1})} 
<\frac{2.01n}{k\log k},
\end{equation*}
with the right-most inequality hold in provided that 
\begin{equation*}
n>100\frac{k\log k}{\log\left(1.5\cdot 3^{k/2-1}\right)}:=n_3.
\end{equation*}
For $m\in\{10,12,14,\ldots,30\}$ and $C_2=C_2(m)$ as in 
\cite[Table 1]{Laurent08}, if 
\begin{equation} \label{eq:lb_n_Laurent_cond}
    n\ge \frac{e^{m-0.38}k\log k}{2}:=n_4,
\end{equation}
then $\log b'+0.38 \ge m$ and now, by \cite[Corollary 2]{Laurent08},
\begin{equation*}
    \log|\Lambda|\ge -C_2(\log b')^2\log A_1\log A_2
    > -C_2\left(\log\left(\frac{2.01n}{k\log k}\right)\right)^2
    \log\left(1.5y_1^{k/2-1}\right)\frac{k}{2}\log k.
\end{equation*}
By Lemma \ref{lem:Laurent_ini_conds} and equations \eqref{eq:bnd_logDelta} and \eqref{eq:c1} we have the inequality
\begin{equation*}
    \log|\Lambda|<\log c_1 -n\log y_1 
    \le \log\left(0.50255k^2(k/2-1)\right)-n\log y_1.
\end{equation*}
Combining the last two displayed relations gives
\begin{equation*}
 n < C_2\left(\log\left(\frac{2.01n}{k\log k}\right)\right)^2
\frac{\log(1.5y_1^{k/2-1})}{\log y_1}\frac{k\log k}{2} +
\frac{\log\left(0.50255k^2(k/2-1)\right)}{\log y_1}.
\end{equation*}
We have $y_1\ge 3$, therefore
\begin{equation} \label{eq:upp_bnd_n}
     n < C_2 \left(\log\left(\frac{2.01n}{k\log k}\right)\right)^2\left(\frac{k}{2}-1+\frac{\log 1.5}{\log 3}\right)\frac{k\log k}{2} + \frac{\log\left(0.50255k^2(k/2-1)\right)}{\log 3}.
\end{equation}
If $n$ satisfies \eqref{eq:lb_n_Laurent_cond} and $n$ is sufficiently large, the relation \eqref{eq:upp_bnd_n} is impossible. For $6\le k\le 66$ we consider \eqref{eq:upp_bnd_n} with $m=10$ and (by \cite[Table 1]{Laurent08}) $C_2=25.2$, while for $70\le k\le 98$ we choose $m=12$ and (by \cite[Table 1]{Laurent08}) $C_2=23.4$. As a consequence, an explicit upper bound $n_0$ for $n$ is obtained and is given in Table \ref{table:bound_n}. For every $k\in\{6,10,14,\ldots,98\}\setminus\{6,10,14,70\}$ the upper bound is larger than $n_4$. For $k=6,10,14,70$, the upper bound resulting from \eqref{eq:upp_bnd_n} is $<n_4$, therefore in Table \ref{table:bound_n} the value  $\lfloor n_4\rfloor$ is given. For every $k$, the bound for $n$ satisfies $n > n_1, n_2,n_3$.

\begin{table}[h]
    \centering
    \begin{tabular}{| c | c || c | c |}
    \hline
        $k$  & $n_0$ & $k$  & $n_0$
    \\ \hline  
        $10$ & $173419$ & $70$ & $16550269$ 
    \\ \hline
        $26$ & $1438387$ & $74$ & $18440172$
    \\ \hline
        $30$ & $2084701$ & $78$ & $20987069$   
    \\ \hline 
        $34$ & $2875394$ & $82$ & $23725616$ 
    \\ \hline
        
      $50$ & $7627398$ & $90$ & $29791351$
    \\ \hline  
     $58$ & $11043693$ & &
     \\ \hline    
    \end{tabular}
    \caption{Upper bound of $n$ for $k$ in Table \ref{table:d1_d2_values}.}
    \label{table:bound_n}
\end{table}

\section{Proof of Theorem \ref{thm:main}}

\begin{proof}
Assume that equation \eqref{eq:initial} has a solution with $x\neq -1,0$. As explained immediately after the announcement of Theorem \ref{thm:main}, the existence of such a solution is equivalent to the existence of a solution of \eqref{eq:main_symmetric} with $x>1$. We show that this leads to a contradiction.

Without loss of generality, we assume that $n$ is prime hence, by Proposition \ref{prop:small_exponents}, $n\ge 11$. On the other hand, in Section \ref{sec:upper_bound_n} we proved that $n\le n_0$, with $n_0$ given in Table \ref{table:bound_n}; thus, $11\le n\le n_0$. Then, by Lemma \ref{lem:equations_system} and Remark \ref{rem:n>2v_p(k)}, the system of the two displayed equations therein has a solution. 

The first equation of this system (equation \eqref{eq:express_x^2+1}) is identical to \eqref{eq:d2_d1_y1n} and, as explained below it, the case $d_1=d_2=1$ is excluded due to Theorem \ref{thm:main_k2}. Therefore, we have $d_1d_2>1$ where the pairs $(d_1,d_2)$ are those appearing in Table \ref{table:d1_d2_values}. If $k=34, 74$ from Proposition \ref{prop:modular_method_k} we have $n\leq 7$ which is a contradiction. If $k\neq 34, 74$, then by Proposition \ref{prop:no_solutions_in_range}, the system of equations of Lemma \ref{lem:equations_system} has no solution for $11\leq n\leq n_0$, which gives the final contradiction.
\end{proof}

\section{Conclusion}

The methods and analysis we use to get Theorem \ref{thm:main} is, in principle, applicable to any given value $k\equiv 2\pmod{4}$ with $k>100$. Therefore, it is quite natural to move our attention to the cases $k\not\equiv 2\pmod{4}$. The case $k\equiv 0\pmod{4}$ is a consequence of the work of Bennett-Ellenberg-Ng about the generalized Fermat equation of signature $(2,4, n)$ that we have already mention \cite{Ellenberg04, BennettEllenbergNg10}. Therefore, the remaining case is when $k$ is odd. 

For $k$ odd the situation is quite different. The key Lemma \ref{lem:equations_system} no longer holds; however, 
equation \eqref{eq:main_symmetric} can be written as 
\begin{equation}
    xf_k(x)=2^{k-1}y^n,
\end{equation}
where $f_k(x)\in\ZZ[x]$. This implies that $x$ is almost a perfect $n$-th power which is a very restricted property for $x$. Even the case $k=5$ is a very challenging problem.

\bibliography{main.bib}{}
\bibliographystyle{plain}
\end{document}